\newenvironment{proof}{\emph{Proof.}}{\hfill $\square$ \\}
\newtheorem{algo}[thm]{Algorithm}
\renewcommand\labelenumi{(\roman{enumi})}
\renewcommand\theenumi\labelenumi
\begin{document}

\begin{frontmatter}

\title{Duality of Geometric Tests for Forward-Flatness} 

\author{Johannes Schrotshamer\corauthref{cor}}\ead{johannes.schrotshamer@jku.at},    
\author{Bernd Kolar}\ead{bernd.kolar@jku.at},  
\author{Markus Sch{\"o}berl}\ead{markus.schoeberl@jku.at}              

\corauth[cor]{Corresponding author.}

\address{Institute of Automatic Control and Control Systems Technology, Johannes Kepler University, Linz, Austria}

\begin{keyword}                           
Difference flatness; Differential-geometric methods; Discrete-time systems; Feedback linearization; Nonlinear control systems; Normal forms.               
\end{keyword}                             

\begin{abstract}                          
Recently it has been shown that the property of forward-flatness for discrete-time systems, which is a generalization of static feedback linearizability and a special case of a more general concept of flatness, can be checked by two different geometric tests. One is based on unique sequences of involutive distributions, while the other is based on a unique sequence of integrable codistributions. In this paper, the relation between these sequences is discussed and it is shown that the tests are in fact dual. The presented results are illustrated by an academic example.
\end{abstract}

\end{frontmatter}

\section{Introduction}

The concept of flatness has been introduced by Fliess, L{\'e}vine,
Martin and Rouchon in the 1990s for nonlinear continuous-time systems,
see e.g. \cite{FliessLevineMartinRouchon:1992} or \cite{FliessLevineMartinRouchon:1995}.
Flat continuous-time systems possess the characteristic property that
all system variables can be expressed by a flat output and its time
derivatives. This property facilitates elegant solutions for trajectory
planning and tracking problems, making the concept of flatness highly
relevant in practical applications. However, currently no verifiable
necessary and sufficient conditions to check the flatness of nonlinear
multi-input systems do exist (see e.g. \cite{NicolauRespondek:2016},
\cite{NicolauRespondek:2017}, or \cite{GstottnerKolarSchoberl:2021b}).

For discrete-time systems, replacing time derivatives of the continuous-time
definition by forward-shifts leads to the concept of forward-flatness
as discussed in \cite{KaldmaeKotta:2013} or \cite{KolarKaldmaeSchoberlKottaSchlacher:2016}.
This is a special case of the more general approaches described in
\cite{DiwoldKolarSchoberl:2020} or \cite{GuillotMillerioux2020}.
In \cite{KolarDiwoldSchoberl:2023} it has been shown that forward-flatness
can be checked by computing a unique sequence of involutive distributions.
Thus, the forward-flatness of discrete-time systems can already be
checked systematically, in contrast to the continuous-time case. Furthermore,
in the recent paper \cite{kolar2024dual}, another test for forward-flatness
has been derived, which is based on a unique sequence of integrable
codistributions. In the present contribution, we show that the sequences
annihilate each other, proving that the tests are dual.\\
It has also been shown through different approaches in \cite{KolarDiwoldSchoberl:2023}
and \cite{kolar2024dual} that if the system is forward-flat, repeated
system decompositions into a subsystem and an endogenous dynamic feedback
are possible by using suitable state- and input transformations. The
method proposed in \cite{KolarDiwoldSchoberl:2023} requires straightening
out distributions to derive the necessary input transformation. In
contrast, the method from \cite{kolar2024dual} only requires normalizing
a certain number of equations. We will show that the input transformation
derived from the method in \cite{kolar2024dual} also straightens
out the corresponding distribution, making it a special case of the
possible input transformations derived from the method in \cite{KolarDiwoldSchoberl:2023}.

The paper is organized as follows: In Section \ref{sec:Discrete-Time-Systems},
we first recapitulate the concept of forward-flatness for discrete-time
systems, followed by an overview of the existing geometric tests for
forward-flatness from \cite{KolarDiwoldSchoberl:2023} and \cite{kolar2024dual}
in Section \ref{sec:Tests forward-flatness}. Then, in Section \ref{sec:Duality},
we prove that the two proposed tests are dual. In Section \ref{sec:decomp}
we show that the method proposed in \cite{kolar2024dual} for a repeated
system decomposition is a special case of the method proposed in \cite{KolarDiwoldSchoberl:2023}.
Finally, we illustrate our main results based on an academic example
in Section \ref{sec:example}.

\section{Discrete-time Systems and Forward-flatness\label{sec:Discrete-Time-Systems}}

We consider nonlinear discrete-time systems of the form
\begin{equation}
	x^{i,+}=f^{i}(x,u),\hphantom{aa}i=1,\dots,n\label{eq:sysEq}
\end{equation}
with $\dim(x)=n$, $\dim(u)=m$, and smooth functions $f^{i}(x,u)$
that satisfy the submersivity condition
\begin{equation}
	\mathrm{rank}(\partial_{(x,u)}f)=n\,.\label{eq:submersivity}
\end{equation}
Since the submersivity condition \eqref{eq:submersivity} is necessary
for accessibility (see e.g. \cite{Grizzle:1993}), it is not a restriction
and is a common assumption in the discrete-time literature. \\
The map $f$ of \eqref{eq:sysEq} can be interpreted geometrically
as a map from a manifold $\mathcal{X}\times\mathcal{U}$ with coordinates
$(x,u)$ to a manifold $\mathcal{X}^{+}$ with coordinates $x^{+}$,
i.e. 
\begin{equation}
	f:\mathcal{X}\times\mathcal{U}\rightarrow\mathcal{X}^{+}\,.\label{eq:map_f}
\end{equation}
Additionally, for the test from \cite{KolarDiwoldSchoberl:2023} the
map $\pi:\mathcal{X}\times\mathcal{U}\rightarrow\mathcal{X}^{+}$
is needed, which is defined by
\begin{equation}
	x^{i,+}=x^{i},\hphantom{aa}i=1,\dots,n\,.\label{eq:piEq}
\end{equation}
Since the proof of duality in Section \ref{sec:Duality} relies heavily
on the use of adapted coordinates, the coordinates
\begin{equation}
	\begin{aligned}\theta^{i} & =f^{i}(x,u),\hphantom{aa}i=1,\dots,n\\
		\xi^{j} & =h^{j}(x,u),\hphantom{aa}j=1,\dots,m
	\end{aligned}
	\label{eq:adapted_coord}
\end{equation}
on $\mathcal{X}\times\mathcal{U}$ are introduced, where $h^{j}(x,u)$
must be chosen so that the Jacobian matrix of the right-hand side
of \eqref{eq:adapted_coord} is regular. Due to the submersivity property
\eqref{eq:submersivity} this is always possible.\\
The notation with a superscript + is used to denote the forward-shift
of the associated variable, while higher-order forward-shifts are
represented using subscripts in brackets, e.g. $u_{[\alpha]}$ denotes
the $\alpha$th forward-shift of $u$. The notation of 1-forms and
related differential-geometric concepts is adopted from \cite{kolar2024dual}.
To keep formulas short and readable, the Einstein summation convention
is used. We also want to highlight that the findings in \cite{KolarDiwoldSchoberl:2023}
and \cite{kolar2024dual}, along with those presented in this paper,
rely on the inverse function theorem, the implicit function theorem,
and the Frobenius theorem. These theorems inherently yield local results,
which is why all our findings are only valid locally in a suitable
neighborhood of an equilibrium $(x_{0},u_{0})$ of \eqref{eq:sysEq}.\\
In the following, we recapitulate the differential-geometric concepts
of projectable distributions and invariant codistributions, which
form the mathematical foundation upon which the contributions \cite{KolarDiwoldSchoberl:2023}
and \cite{kolar2024dual} are based. Checking whether a certain vector
field is projectable or not becomes very simple in adapted coordinates.
A general vector field 
\[
v=a^{i}(\theta,\xi)\partial_{\theta^{i}}+b^{j}(\theta,\xi)\partial_{\xi^{j}}
\]
on $\mathcal{X}\times\mathcal{U}$ is called ``projectable'' (w.r.t.
the map \eqref{eq:map_f}) if and only if the functions $a^{i}$ are
independent of the coordinates $\xi$, i.e., if the vector field meets
\begin{equation}
	v=a^{i}(\theta)\partial_{\theta^{i}}+b^{j}(\theta,\xi)\partial_{\xi^{j}}\,.\label{eq:proj_vctor_field}
\end{equation}
The pointwise pushforward $f_{*}(v)$ then induces a well-defined
vector field $f_{*}(v)=a^{i}(x^{+})\partial_{x^{i,+}}$ on $\mathcal{X}^{+}$.
Furthermore, we call a distribution $D$ on $\mathcal{X}\times\mathcal{U}$
projectable (w.r.t. $f$), if it admits a basis that consists of projectable
vector fields \eqref{eq:proj_vctor_field} only.\\
A $p$-dimensional codistribution $P=\mathrm{span}\{\omega^{1},\ldots,\omega^{p}\}$,
defined on an $n$-dimensional manifold $\mathcal{M}$ with local
coordinates $(x^{1},\ldots,x^{n})$, is called invariant w.r.t. a
vector field $v$ if the Lie-derivative $L_{v}\omega$ meets $L_{v}\omega\in P$
for all 1-forms $\omega\in P$. Accordingly, $P$ is called invariant
w.r.t. a $d$-dimensional distribution $D=\mathrm{span}\{v_{1},\ldots,v_{d}\}$
if $L_{v}\omega\in P$ for all 1-forms $\omega\in P$ and vector fields
$v\in D$.

To summarize the concept of forward-flatness, we introduce a space
with coordinates $(x,u,u_{[1]},u_{[2]},\ldots)$. Future values of
a smooth function $g$ defined on this space can be determined by
repeatedly applying the forward-shift operator $\delta$, which is
defined according to the rule
\[
\delta(g(x,u,u_{[1]},u_{[2]},\ldots))=g(f(x,u),u_{[1]},u_{[2]},u_{[3]},\ldots)\,.
\]
In this contribution, we will only need backward-shifts of functions
of the form $g(f(x,u))$. For this case, the inverse operator $\delta^{-1}$
is defined as $\delta^{-1}(g(f(x,u)))=g(x)$. Additionally, the dual
test outlined in \cite{kolar2024dual} necessitates backward-shifts
of codistributions spanned by 1-forms of the form $\omega_{i}(f(x,u))\mathrm{d}f^{i}$.
As the shift operators are applied to 1-forms by shifting both their
coefficients and differentials, the backward-shift results in 
\[
\delta^{-1}(\omega_{i}(f(x,u))\mathrm{d}f^{i})=\omega_{i}(x)\mathrm{d}x^{i}
\]
and the backward-shift of codistributions with base elements of this
form is defined accordingly. Now the concept of forward-flatness can
be defined as follows.
\begin{defn}
	\label{def:forward-flatness}The system \eqref{eq:sysEq} is said
	to be forward-flat around an equilibrium $(x_{0},u_{0})$, if the
	$n+m$ coordinate functions $x$ and $u$ can be expressed locally
	by an $m$-tuple of functions
	\begin{equation}
		y^{j}=\varphi^{j}(x,u,u_{[1]},\ldots,u_{[q]})\,,\quad j=1,\ldots,m\label{eq:flat_output}
	\end{equation}
	and their forward-shifts $y_{[1]}=\delta(\varphi(x,u,u_{[1]},\ldots,u_{[q]}))$,
	$y_{[2]}=\delta^{2}(\varphi(x,u,u_{[1]},\ldots,u_{[q]}))$, $\ldots$
	up to some finite order. The $m$-tuple \eqref{eq:flat_output} is
	called a flat output.
\end{defn}

The representation of $x$ and $u$ by the flat output and its forward-shifts
is unique and has the form\footnote{The multi-index $R=(r_{1},\ldots,r_{m})$ contains the number of forward-shifts
	of the individual components of the flat output which is needed to
	express $x$ and $u$, and $y_{[0,R]}$ is an abbreviation for $y$
	and its forward-shifts up to order $R$.}
\begin{equation}
	x^{i}=F_{x}^{i}(y_{[0,R-1]})\,,\:u^{j}=F_{u}^{j}(y_{[0,R]})\label{eq:flat_param}
\end{equation}
with $i=1,\ldots,n$ and $j=1,\ldots,m$. The term forward-flatness
implies that both the flat output \eqref{eq:flat_output} and the
corresponding parameterization of the system variables \eqref{eq:flat_param}
only involve forward-shifts. In contrast, the more general case discussed
in \cite{DiwoldKolarSchoberl:2020} or \cite{GuillotMillerioux2020}
also involves backward-shifts. As demonstrated in \cite{KolarSchoberlDiwold:2019},
forward-flat systems can always be transformed into a special triangular
form.
\begin{defn}
	\label{def:basic_decomposition_flat}A forward-flat system (\ref{eq:sysEq})
	can be transformed by a state- and input transformation\begin{subequations}\label{eq:decomposition_coord_transformation}
		\begin{align}
			(\bar{x}_{1},\bar{x}_{2})=\:\: & \Phi_{x}(x)\label{eq:decompostion_state_transformation}\\
			(\bar{u}_{1},\bar{u}_{2})=\:\: & \Phi_{u}(x,u)\label{eq:decompostion_input_transformation}
		\end{align}
	\end{subequations}into the form\begin{subequations}
		\label{eq:basic_decomposition_flat}
		\begin{align}
			\bar{x}_{2}^{+} & =f_{2}(\bar{x}_{2},\bar{x}_{1},\bar{u}_{2})\label{eq:decomposition_subsystem}\\
			\bar{x}_{1}^{+} & =f_{1}(\bar{x}_{2},\bar{x}_{1},\bar{u}_{2},\bar{u}_{1})\label{eq:decomposition_feedback}
		\end{align}
	\end{subequations} with $\dim(\bar{x}_{1})\geq1$ and $\mathrm{rank}(\partial_{\bar{u}_{1}}f_{1})=\dim(\bar{x}_{1})$. 
\end{defn}

According to Lemma 3 from \cite{kolar2024dual}, the importance of
the triangular form (\ref{eq:basic_decomposition_flat}) lies in the
fact that a system of the form (\ref{eq:basic_decomposition_flat})
is forward-flat if and only if the subsystem \eqref{eq:decomposition_subsystem}
is forward-flat. The equations \eqref{eq:decomposition_feedback}
are just an endogenous dynamic feedback for the subsystem \eqref{eq:decomposition_subsystem}.
As a result, the flat output of (\ref{eq:basic_decomposition_flat})
can be systematically determined by extending the flat output of \eqref{eq:decomposition_subsystem}
with the redundant inputs of (\ref{eq:basic_decomposition_flat})
like in Lemma 3 of \cite{kolar2024dual}.

\section{Geometric Tests for Forward-flatness\label{sec:Tests forward-flatness}}

In this section, a short overview of the geometric tests for forward-flatness
of \cite{KolarDiwoldSchoberl:2023} and \cite{kolar2024dual} is given.

\subsection{Distribution-based Test for Forward-flatness\label{subsec:Geometric}}

As shown in \cite{KolarDiwoldSchoberl:2023}, it is possible to formulate
a test for forward-flatness based on certain sequences of distributions.
For a better comparison of the two tests, Algorithm 1 from \cite{KolarDiwoldSchoberl:2023}
is rewritten with a focus on the sequence of involutive distributions
$E_{0}\subset E_{1}\subset\cdots\subset E_{\bar{k}-1}$.

\begin{algo}\label{alg:definition_sequence}\label{alg:distribution}Start
	with the distribution $E_{0}=\mathrm{span}\{\partial_{u}\}$ on $\mathcal{X}\times\mathcal{U}$
	and repeat the following steps for $k\geq1$:\textbf{\vspace{-0.2cm}
	}
	\begin{enumerate}
		\item[\emph{1.}] Determine the largest projectable subdistribution $D_{k-1}$ that
		meets $D_{k-1}\subseteq E_{k-1}$.
		\item[\emph{2.}] Compute the pushforward $\Delta_{k}=f_{*}(D_{k-1})$.
		\item[\emph{3.}] Proceed with $E_{k}=\pi_{*}^{-1}(\Delta_{k})$.\textbf{\vspace{-0.1cm}
			\vspace{-0.1cm}
		}
	\end{enumerate}
	Stop if $\dim(E_{\bar{k}})=\dim(E_{\bar{k}-1})$ for some step $\bar{k}$.
	
\end{algo}
\begin{thm}
	\label{thm:condition}The system \eqref{eq:sysEq} is forward-flat,
	if and only if $\dim(E_{\bar{k}-1})=n+m$.
\end{thm}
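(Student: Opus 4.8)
The plan is to establish equivalence between the termination condition of Algorithm~\ref{alg:distribution} and the existence of a flat parameterization \eqref{eq:flat_param}. The key structural fact I would exploit is that the sequence $E_0 \subset E_1 \subset \cdots$ is intimately tied to the repeated decomposition \eqref{eq:basic_decomposition_flat}, and that by Lemma~3 of \cite{kolar2024dual} forward-flatness of \eqref{eq:sysEq} is equivalent to forward-flatness of the subsystem obtained after peeling off one endogenous dynamic feedback. This suggests an induction on the system dimension, where each step of the algorithm corresponds to one layer of the triangular decomposition.

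\textbf{Sufficiency.} First I would assume the algorithm terminates with $\dim(E_{\bar k-1})=n+m$, i.e. $E_{\bar k-1}=T(\mathcal{X}\times\mathcal{U})$. The idea is to read off a flat output from the sequence. Because each $E_k$ is involutive (by construction and the Frobenius theorem), I can straighten out the distributions in adapted coordinates. Working backwards from $E_{\bar k-1}$, the nested involutive distributions induce a filtration of the state-input space; the functions whose differentials annihilate the successive quotients $E_k/E_{k-1}$ are precisely the candidates for flat-output components and their forward-shifts. Concretely, I would show that integrating the largest projectable subdistributions $D_{k-1}$ and pushing forward via $f_*$ recovers, step by step, the representation $x^i=F_x^i(y_{[0,R-1]})$, $u^j=F_u^j(y_{[0,R]})$. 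Here the projectability condition \eqref{eq:proj_vctor_field} is exactly what guarantees that the pushforward $\Delta_k=f_*(D_{k-1})$ is well defined, so that only forward-shifts appear, matching the forward-flatness requirement.

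\textbf{Necessity.} Conversely, assuming forward-flatness, I would use the triangular form \eqref{eq:basic_decomposition_flat} guaranteed by \cite{KolarSchoberlDiwold:2019}. The redundant inputs $\bar u_1$ span a projectable subdistribution sitting inside $E_0=\mathrm{span}\{\partial_u\}$, and its pushforward, pulled back through $\pi$ via step~3, enlarges the distribution by exactly the directions corresponding to $\bar x_1$. Iterating this and invoking the inductive hypothesis on the flat subsystem \eqref{eq:decomposition_subsystem}, the dimensions $\dim(E_k)$ must strictly increase until the full tangent space is exhausted, forcing $\dim(E_{\bar k-1})=n+m$. The monotonicity $E_{k-1}\subseteq E_k$ together with the stopping rule ensures the sequence stabilizes exactly when no further projectable directions can be gained, which for a flat system happens only at the top.

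The main obstacle I anticipate is showing rigorously that the stabilization of the sequence at a dimension strictly below $n+m$ obstructs flatness: one must argue that if $\dim(E_{\bar k})=\dim(E_{\bar k-1})<n+m$, then the complementary directions can never be parameterized by forward-shifts of any candidate output, i.e. they force genuine backward-shifts into the parameterization. This is the crux where the distinction between forward-flatness and the more general flatness of \cite{DiwoldKolarSchoberl:2020} enters, and it requires carefully tracking how the projectability constraint in step~1 caps the growth of the sequence. Establishing that the uniqueness of the sequence (independence of the choice of adapted coordinates \eqref{eq:adapted_coord} and of the basis for $D_{k-1}$) makes the test well posed is a secondary technical point I would verify before concluding.
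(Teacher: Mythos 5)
First, a point of reference: the paper you are working from does not prove Theorem~\ref{thm:condition} at all. It is imported verbatim from \cite{KolarDiwoldSchoberl:2023}, and the present paper's contribution (Theorem~\ref{thm:main_thm}) takes it as given. So your attempt must be measured against the argument in that reference. Your overall strategy is the right one and matches it: induction on the state dimension, using the triangular form (\ref{eq:basic_decomposition_flat}) and the fact that such a system is forward-flat if and only if the subsystem (\ref{eq:decomposition_subsystem}) is (Lemma~3 of \cite{kolar2024dual}). Your sufficiency half is sound as a sketch: if $\dim(E_{\bar{k}-1})=n+m$, straightening out the distributions produces a chain of decompositions terminating in a subsystem with zero-dimensional state, which is trivially forward-flat, and flatness then propagates back up the chain.

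The genuine gap is in the necessity half, and it is exactly the point you defer to the end as an ``anticipated obstacle'' without resolving it. Forward-flatness gives you, via \cite{KolarSchoberlDiwold:2019}, the existence of \emph{some} decomposition chain, i.e.\ \emph{some} nested sequence of projectable subdistributions reaching full dimension. But Algorithm~\ref{alg:distribution} does not follow an arbitrary chain: at every step it commits to the \emph{largest} projectable subdistribution $D_{k-1}\subseteq E_{k-1}$. Your assertion that ``the dimensions $\dim(E_k)$ must strictly increase until the full tangent space is exhausted'' therefore needs a lemma you never supply: either that decomposing along the maximal $D_{k-1}$ again yields a forward-flat subsystem (the greedy choice does not destroy flatness), or, equivalently, a domination statement that the algorithm's $E_k$ contains, for every $k$, the distribution induced by any admissible decomposition chain, so that premature stabilization $\dim(E_{\bar{k}})=\dim(E_{\bar{k}-1})<n+m$ would contradict the existence of a chain reaching $n+m$. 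This maximality argument is the actual crux of the proof in \cite{KolarDiwoldSchoberl:2023}; your induction also quietly requires it when lifting the subsystem's sequence (defined on the $(\bar{x}_2,\bar{x}_1,\bar{u}_2)$-space) to the sequence the algorithm computes on the full space. Moreover, the route you propose for closing the gap --- showing that the complementary directions ``force genuine backward-shifts into the parameterization'' --- points away from the workable argument: the known proof never tracks shifts in any candidate parameterization; it operates entirely with the maximality of $D_{k-1}$ and the recursive flatness characterization.
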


On $\mathcal{X}\times\mathcal{U}$, Algorithm \ref{alg:distribution}
yields a unique nested sequence of projectable and involutive distributions
\begin{equation}
	D_{0}\subset D_{1}\subset\cdots\subset D_{\bar{k}-2}\label{eq:sequ_D_k}
\end{equation}
and a unique nested sequence of involutive distributions 
\begin{equation}
	\mathrm{span}\{0\}=\Delta_{0}\subset\Delta_{1}\subset\cdots\subset\Delta_{\bar{k}-1}\label{eq:sequ_Delta_k}
\end{equation}
on $\mathcal{X}^{+}$, which are related by $\Delta_{k}=f_{*}(D_{k-1})$.
With $E_{k}=\pi_{*}^{-1}(\Delta_{k})$ and due to the fact that the
involutivity of $\Delta_{k}$ implies the involutivity of $E_{k}$,
it also yields a unique nested sequence of involutive distributions
\begin{equation}
	\mathrm{span}\{\partial_{u}\}=E_{0}\subset E_{1}\subset\cdots\subset E_{\bar{k}-1}\:.\label{eq:E_sequ}
\end{equation}
If a system \eqref{eq:sysEq} is forward-flat, then the corresponding
distributions $\Delta_{\bar{k}-1}$ and $E_{\bar{k}-1}$ result in
$\Delta_{\bar{k}-1}=\mathrm{span}\{\partial_{x^{+}}\}$ and $E_{\bar{k}-1}=\mathrm{span}\{\partial_{x},\partial_{u}\}$.

\subsection{Codistribution-based Test for Forward-flatness\label{subsec:Geometric-Dual}}

It is also possible to formulate a test to assess the forward-flatness
of a system \eqref{eq:sysEq} based on a certain sequence of integrable
codistributions, as proposed in \cite{kolar2024dual}.

\begin{algo}\label{alg:definition_sequence}\label{alg:dual}Start
	with the codistribution $P_{1}=\mathrm{span}\{\mathrm{d}x\}$ on $\mathcal{X}\times\mathcal{U}$
	and repeat the following steps for $k\geq1$:\textbf{\vspace{-0.2cm}
	}
	\begin{enumerate}
		\item[\emph{1.}] Compute the intersection $P_{k}\cap\mathrm{span}\{\mathrm{d}f\}$.
		\item[\emph{2.}] Determine the smallest codistribution $P_{k+1}^{+}$ which is invariant
		w.r.t. the distribution $\mathrm{span}\{\mathrm{d}f\}^{\perp}$ and
		contains $P_{k}\cap\mathrm{span}\{\mathrm{d}f\}$.
		\item[\emph{3.}] Define $P_{k+1}=\delta^{-1}(P_{k+1}^{+})$.
	\end{enumerate}
	\textbf{\vspace{-0.1cm}
		\vspace{-0.1cm}
	}Stop if $P_{\bar{k}+1}=P_{\bar{k}}$ for some step $\bar{k}$.
	
\end{algo}

On $\mathcal{X}\times\mathcal{U}$, Algorithm \ref{alg:dual} yields
a unique nested sequence of integrable codistributions 
\begin{equation}
	P_{\bar{k}}\subset P_{\bar{k}-1}\subset\ldots\subset P_{1}=\mathrm{span}\{\mathrm{d}x\}\,.\label{eq:P_sequ}
\end{equation}

\begin{thm}
	\label{thm:dual_condition}The system \eqref{eq:sysEq} is forward-flat,
	if and only if $P_{\bar{k}}=0$.
\end{thm}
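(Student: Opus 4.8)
The plan is to establish the duality announced in the introduction — that the two sequences annihilate each other — and then to read off Theorem \ref{thm:dual_condition} directly from Theorem \ref{thm:condition}. Concretely, I would prove by induction on $k$ that, understood as codistributions and distributions on $\mathcal{X}\times\mathcal{U}$ of locally constant rank,
\[
P_k=(E_{k-1})^{\perp},\qquad k\geq 1.
\]
Granting this, the statement is immediate. Since the ambient manifold has dimension $n+m$, one has $\dim(E_{\bar k-1})=n+m$ if and only if $E_{\bar k-1}$ is the full tangent bundle, i.e. $(E_{\bar k-1})^{\perp}=0$, i.e. $P_{\bar k}=0$. Moreover the two stopping criteria coincide, because $P_{\bar k+1}=P_{\bar k}$ is equivalent to $(E_{\bar k})^{\perp}=(E_{\bar k-1})^{\perp}$ and hence to $\dim(E_{\bar k})=\dim(E_{\bar k-1})$. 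Thus the whole argument reduces to the annihilation identity, which is precisely the content to be developed in Section \ref{sec:Duality}.

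For the base case, $P_1=\mathrm{span}\{\mathrm{d}x\}$ and $E_0=\mathrm{span}\{\partial_u\}$ satisfy $\mathrm{span}\{\mathrm{d}x\}=(\mathrm{span}\{\partial_u\})^{\perp}$, so $P_1=(E_0)^{\perp}$. For the inductive step I would work in the adapted coordinates $(\theta,\xi)$ of \eqref{eq:adapted_coord}, in which the relevant objects become transparent: $\mathrm{span}\{\mathrm{d}f\}=\mathrm{span}\{\mathrm{d}\theta\}$, its annihilator is the vertical distribution $\mathrm{span}\{\mathrm{d}f\}^{\perp}=\mathrm{span}\{\partial_\xi\}$, and the pushforward $f_*$ acts on a projectable field \eqref{eq:proj_vctor_field} by keeping its ($\xi$-independent) $\theta$-components and relabelling $\theta\to x^{+}$.

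Assuming $P_k=(E_{k-1})^{\perp}$, I would match the three steps of the two algorithms under annihilation. Step~3 is the clean one: for any distribution $\Delta$ on $\mathcal{X}^{+}$ one has $(\pi_*^{-1}(\Delta))^{\perp}=\delta^{-1}(\Delta^{\perp})$, because $\pi:(x,u)\mapsto x^{+}=x$ and the operator $\delta^{-1}$ both act on a generator $c_i(x^{+})\mathrm{d}x^{i,+}$ by the same relabelling $\mathrm{d}x^{i,+}=\mathrm{d}f^{i}\mapsto\mathrm{d}x^{i}$ and $c_i(x^{+})\mapsto c_i(x)$; here one uses that $E_k=\pi_*^{-1}(\Delta_k)$ has an annihilator generated by $x$-only forms, since pointwise $(E_k)^{\perp}$ equals the annihilator of $\mathrm{span}\{a_l(x)\}$ and is independent of $u$. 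Consequently $E_k=\pi_*^{-1}(\Delta_k)$ and $P_{k+1}=\delta^{-1}(P_{k+1}^{+})$ yield $(E_k)^{\perp}=\delta^{-1}((\Delta_k)^{\perp})$, so it remains only to prove the core identity $P_{k+1}^{+}=(\Delta_k)^{\perp}$.

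The main obstacle is precisely this core identity, i.e. showing that steps~1--2 are dual: that forming the largest projectable subdistribution $D_{k-1}\subseteq E_{k-1}$ and pushing it forward to $\Delta_k=f_*(D_{k-1})$ is annihilator-dual to intersecting $P_k$ with $\mathrm{span}\{\mathrm{d}f\}$ and then taking the smallest $\mathrm{span}\{\mathrm{d}f\}^{\perp}$-invariant hull. Pulling both sides back via $f$ to $\mathrm{d}\theta$-forms, the left-hand side becomes the $\xi$-independent forms $c_i(\theta)\mathrm{d}\theta^{i}$ annihilating the $\theta$-projection of $D_{k-1}$, whereas the right-hand side is the smallest $\partial_\xi$-invariant codistribution containing $Q_k:=(E_{k-1})^{\perp}\cap\mathrm{span}\{\mathrm{d}\theta\}$. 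Since a $\mathrm{d}\theta$-form annihilates a field of $E_{k-1}$ exactly when it kills that field's $\theta$-component, $Q_k$ consists of the (possibly $\xi$-dependent) $\mathrm{d}\theta$-forms annihilating the projection of $E_{k-1}$, and the role of the $\partial_\xi$-invariant closure is to differentiate these coefficients in $\xi$ and thereby strip away the $\xi$-dependence, leaving exactly the forms that annihilate the maximal projectable part — dual to discarding the non-projectable directions before applying $f_*$. I would make this rigorous through a pointwise linear-algebra argument on the coefficient vectors together with a constant-rank/Frobenius argument guaranteeing smooth $\xi$-independent generators. I expect this to be the delicate step, since it is where two genuinely different constructions — a maximal projectable subdistribution and a minimal invariant codistribution — must be identified; the remaining parts, as indicated above, then follow by the clean correspondences and the deduction from Theorem \ref{thm:condition}. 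A shorter alternative, appropriate in this recapitulating section, is simply to invoke the proof of this statement given in \cite{kolar2024dual}.
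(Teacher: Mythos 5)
Your proposal is logically sound, but it takes a genuinely different route from the paper: the paper itself gives no proof of Theorem \ref{thm:dual_condition} at all -- it is recapitulated from \cite{kolar2024dual}, where it is established directly and independently of the distribution-based test. You instead obtain it as a corollary of Theorem \ref{thm:condition} together with the annihilation relation $P_{k}=E_{k-1}^{\perp}$, which is precisely this paper's main contribution (Theorem \ref{thm:main_thm}, proved in Section \ref{sec:Duality}); there is no circularity in this, since the proof of Theorem \ref{thm:main_thm} uses only the constructions of Algorithms \ref{alg:distribution} and \ref{alg:dual}, not either flatness characterization. Your reduction is clean and complete at the level of logic: the base case, the step-3 correspondence between $\pi_{*}^{-1}$ and $\delta^{-1}$, and in particular your observation that nestedness makes the two stopping criteria coincide (so both algorithms terminate at the same $\bar{k}$) -- a point the paper leaves implicit. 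What your route buys is economy: once duality is available, the codistribution test needs no independent justification. What it costs is that the entire burden falls on the ``core identity'' relating the largest projectable subdistribution $D_{k-1}$ to the smallest $\mathrm{span}\{\partial_{\xi}\}$-invariant hull of $P_{k}\cap\mathrm{span}\{\mathrm{d}\theta\}$, which you only sketch (``differentiate the coefficients in $\xi$ and strip away the $\xi$-dependence''). That sketch is the right idea, but it is exactly the nontrivial content that the paper formalizes through the normalized bases \eqref{eq:v_bar_d_norm}--\eqref{eq:omega_xi_norm}, the derivative matrix $M$ of \eqref{eq:matrix_M}, and Propositions \ref{prop:D_bar} and \ref{prop:P_hat}, where the kernel of $M$ yields $D_{k-1}$ and the row space of $\hat{M}$ yields the added 1-forms \eqref{eq:added_1forms}; to make your argument complete you would either have to reproduce that machinery or invoke Section \ref{sec:Duality}. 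Your fallback -- simply citing \cite{kolar2024dual} -- is what the paper actually does, and is the appropriate choice for a recapitulating section.
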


\section{Duality of the Sequences\label{sec:Duality}}

Before we prove our main result -- the duality of the sequences \eqref{eq:E_sequ}
and \eqref{eq:P_sequ} defined by Algorithm \ref{alg:distribution}
and \ref{alg:dual} -- in Subsection \ref{subsec:Duality}, we derive
some important technical results.

\subsection{Technical Results}

The following results apply to a general $d$-dimensional distribution
$D$ on the $n+m$-dimensional manifold $\mathcal{X}\times\mathcal{U}$
and its annihilator $P=D^{\perp}$.\\
Throughout, we use adapted coordinates \eqref{eq:adapted_coord}.
First, up to a renumbering of coordinates, $D$ admits a basis with
normalized vector fields 
\begin{equation}
	\begin{aligned}v_{k} & =\partial_{\theta^{k}}+a_{k}^{\bar{d}+1}(\theta,\xi)\partial_{\theta^{\bar{d}+1}}+\cdots+a_{k}^{n}(\theta,\xi)\partial_{\theta^{n}}\\
		& +b_{k}^{1}(\theta,\xi)\partial_{\xi^{1}}+\cdots+b_{k}^{m-d+\bar{d}}(\theta,\xi)\partial_{\xi^{m-d+\bar{d}}}
	\end{aligned}
	\label{eq:v_bar_d_norm}
\end{equation}
\begin{equation}
	v_{l}=b_{l}^{1}(\theta,\xi)\partial_{\xi^{1}}+\cdots+b_{l}^{m-d+\bar{d}}(\theta,\xi)\partial_{\xi^{m-d+\bar{d}}}+\partial_{\xi^{m-d+l}}\label{eq:v_d_norm}
\end{equation}
with $k=1,\ldots,\bar{d}$ and $l=\bar{d}+1,\ldots,d$. A straightforward
calculation shows that its $(n+m-d)$-dimensional annihilator $P$
is spanned by the 1-forms 
\begin{equation}
	\omega^{s}=\alpha_{1}^{s}(\theta,\xi)\mathrm{d}\theta^{1}+\cdots+\alpha_{\bar{d}}^{s}(\theta,\xi)\mathrm{d}\theta^{\bar{d}}+\mathrm{d}\theta^{\bar{d}+s}\label{eq:omega_theta_norm}
\end{equation}
\begin{equation}
	\begin{aligned}\omega^{t} & =\alpha_{1}^{t}(\theta,\xi)\mathrm{d}\theta^{1}+\cdots+\alpha_{\bar{d}}^{t}(\theta,\xi)\mathrm{d}\theta^{\bar{d}}+\mathrm{d}\xi^{t-n+\bar{d}}\\
		& +\beta_{m-d+\bar{d}+1}^{t}(\theta,\xi)\mathrm{d}\xi^{m-d+\bar{d}+1}+\cdots+\beta_{m}^{t}(\theta,\xi)\mathrm{d}\xi^{m}
	\end{aligned}
	\label{eq:omega_xi_norm}
\end{equation}
with $s=1,\ldots,n-\bar{d}$ and $t=n-\bar{d}+1,\ldots,n+m-d$\footnote{So that there occurs a $(n+m-d)\times(n+m-d)$ identity matrix in
	the coefficients of the coordinates $\theta^{\bar{d}+1},\ldots,\theta^{n},\xi^{1},\ldots,\xi^{m-d+\bar{d}}$.}, with the coefficients given by
\begin{equation}
	\alpha_{k}^{s}=-a_{k}^{s+\bar{d}},\,\alpha_{k}^{t}=-b_{k}^{t-n+\bar{d}}\text{ and }\beta_{l}^{t}=-b_{l}^{t-n+\bar{d}}\:.\label{eq:coeffs}
\end{equation}
Next, we reformulate the computation of the largest projectable subdistribution
of the distribution $D$, such that in contrast to the multi-step
procedure presented in \cite{KolarDiwoldSchoberl:2023}, the computations
are summarized in one single step. This will be essential in order
to prove our main result in Subsection \ref{subsec:Duality}.
\begin{prop}
	\label{prop:D_bar}Consider a $d$-dimensional distribution $D$ with
	normalized base elements of the form \eqref{eq:v_bar_d_norm}, \eqref{eq:v_d_norm}
	and the $(n-\bar{d})\times\bar{d}$-matrix $L=\left[\begin{array}{lll}
		a_{1}^{\bar{i}}(\theta,\xi) & \cdots & a_{\bar{d}}^{\bar{i}}(\theta,\xi)\end{array}\right]$ with $\bar{i}=\bar{d}+1,\ldots,n$. Combining all partial derivatives
	of $L$ w.r.t. the coordinates $\xi$ in one matrix, results in 
	\begin{equation}
		M=\begin{bmatrix}\partial_{\xi^{j}}L\\
			\partial_{\xi^{p}}\partial_{\xi^{j}}L\\
			\vdots
		\end{bmatrix}\label{eq:matrix_M}
	\end{equation}
	with\textup{ $j,p=1,\ldots,m$}\footnote{The matrix $M$ is composed of all (also higher order, repeated and
		mixed) partial derivatives of $L$, until no further linearly independent
		rows are obtained. The matrix $M$ certainly contains a finite number
		of linearly independent rows, since the rank is limited by the number
		of columns $\bar{d}$.}\textup{. W}ith the column vectors \textup{$\left[\begin{array}{lll}
			c_{1}^{1}(\theta) & \cdots & c_{1}^{\bar{d}}(\theta)\end{array}\right]^{T},\ldots,$ $\left[\begin{array}{lll}
			c_{q}^{1}(\theta) & \cdots & c_{q}^{\bar{d}}(\theta)\end{array}\right]^{T}$ }forming a basis for the\textup{ $q$}-dimensional kernel of the
	matrix $M$, the largest projectable subdistribution of $D$ is given
	by
	\begin{equation}
		\begin{aligned}\bar{D} & =\mathrm{span}\left\{ c_{1}^{1}(\theta)v_{1}+\cdots+c_{1}^{\bar{d}}(\theta)v_{\bar{d}},\ldots,\right.\\
			& \left.c_{q}^{1}(\theta)v_{1}+\cdots+c_{q}^{\bar{d}}(\theta)v_{\bar{d}},v_{\bar{d}+1},\ldots,v_{d}\right\} \\
			& =\mathrm{span}\{\bar{v}_{1},\ldots,\bar{v}_{q},v_{\bar{d}+1},\ldots,v_{d}\}\,.
		\end{aligned}
		\label{eq:basis_projectable_subdist}
	\end{equation}
\end{prop}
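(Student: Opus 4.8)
The plan is to prove the formula \eqref{eq:basis_projectable_subdist} directly from the definition of the largest projectable subdistribution, exploiting that in the adapted coordinates \eqref{eq:adapted_coord} the map \eqref{eq:map_f} is simply the projection $(\theta,\xi)\mapsto\theta$. Consequently, a vector field is projectable in the sense of \eqref{eq:proj_vctor_field} if and only if the coefficients of its $\partial_{\theta}$-part are independent of $\xi$. First I would write a general element of $D$ as $w=c^{k}v_{k}+e^{l}v_{l}$ with $k=1,\dots,\bar d$ and $l=\bar d+1,\dots,d$, where $c^{k}$ and $e^{l}$ are arbitrary smooth functions. Since the fields $v_{\bar d+1},\dots,v_{d}$ in \eqref{eq:v_d_norm} carry no $\partial_{\theta}$-part, they are automatically projectable and the coefficients $e^{l}$ remain completely free. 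The $\partial_{\theta}$-part of $w$ is $c^{k}\partial_{\theta^{k}}+\bigl(c^{k}a_{k}^{\bar i}\bigr)\partial_{\theta^{\bar i}}$ with $\bar i=\bar d+1,\dots,n$, and from the normalized structure of \eqref{eq:v_bar_d_norm}, in which $\partial_{\theta^{k}}$ appears with coefficient one, one reads off that $w$ is projectable precisely when each $c^{k}$ is independent of $\xi$, i.e.\ $c^{k}=c^{k}(\theta)$, and when in addition every entry of $L\,c$ is independent of $\xi$, where $c=[\,c^{1}\ \cdots\ c^{\bar d}\,]^{T}$ and $L$ is the matrix from the statement.

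The next step is to turn the functional condition that $L\,c$ be independent of $\xi$ into the purely algebraic kernel condition $M\,c=0$. Because $c$ depends on $\theta$ only, differentiation with respect to the coordinates $\xi$ passes through $c$, so $\partial_{\xi^{j}}(L\,c)=(\partial_{\xi^{j}}L)\,c$, and iterating gives $\partial_{\xi^{p}}\partial_{\xi^{j}}(L\,c)=(\partial_{\xi^{p}}\partial_{\xi^{j}}L)\,c$, and so on. Hence $L\,c$ is independent of $\xi$ exactly when $c$ is annihilated by every derivative block occurring in the matrix $M$ of \eqref{eq:matrix_M}, that is, when $M\,c=0$. Choosing the basis $c_{1}(\theta),\dots,c_{q}(\theta)$ of the $q$-dimensional kernel of $M$ then yields precisely the projectable combinations $\bar v_{i}=c_{i}^{k}v_{k}$, so that $\bar v_{1},\dots,\bar v_{q},v_{\bar d+1},\dots,v_{d}$ are projectable vector fields. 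Their linear independence follows from the triangular normalization in \eqref{eq:v_bar_d_norm}, \eqref{eq:v_d_norm}, since the leading terms $\partial_{\theta^{1}},\dots,\partial_{\theta^{\bar d}}$ and $\partial_{\xi^{m-d+\bar d+1}},\dots,\partial_{\xi^{m}}$ cannot cancel, together with the linear independence of the kernel basis; this fixes $\dim\bar D=q+(d-\bar d)$.

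Maximality is then immediate, and I would argue that $\bar D$ contains \emph{every} projectable vector field of $D$. Indeed, by the first step any projectable $w\in D$ has a $\xi$-independent coefficient vector $c=c(\theta)$ satisfying $M\,c=0$, hence $c$ is a combination of $c_{1},\dots,c_{q}$ and $w$ lies in $\bar D$. Therefore, if $\tilde D\subseteq D$ is any subdistribution admitting a basis of projectable vector fields, each such basis field lies in $\bar D$ and thus $\tilde D\subseteq\bar D$; since $\bar D$ is itself projectable, it is the largest projectable subdistribution, which is the assertion \eqref{eq:basis_projectable_subdist}.

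I expect the main obstacle to be the rigorous justification of the equivalence between the finite-order algebraic condition $M\,c=0$ and the genuinely functional statement that $L\,c$ is constant along the $\xi$-directions. The delicate point is that $M$ collects derivative blocks only until no further linearly independent rows are obtained, so one must verify that saturating $M$ in this way already captures all Taylor coefficients of $L\,c$ in $\xi$, and moreover that the pointwise kernel of $M$ is constant along $\xi$ so that it genuinely admits a basis $c_{i}(\theta)$ depending on $\theta$ alone. This rests on the constant-rank and local smoothness (respectively analyticity) assumptions underlying the whole construction; once these are invoked, the vanishing of all $\xi$-derivatives of $L\,c$ forces $L\,c$ to be $\xi$-independent and the argument closes.
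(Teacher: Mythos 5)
Your proposal is correct and follows essentially the same route as the paper's own proof: the identity block in \eqref{eq:v_bar_d_norm} forces $\xi$-independent coefficients $c^{k}(\theta)$, the requirement that $Lc$ be $\xi$-independent is converted into the kernel condition $Mc=0$, and a kernel basis together with $v_{\bar{d}+1},\ldots,v_{d}$ spans $\bar{D}$ (your explicit maximality argument is left implicit in the paper). The one point you defer -- why $\mathrm{ker}(M)$ is constant along $\xi$ so that it admits a basis $c_{i}(\theta)$, and why the finite saturation suffices -- is exactly what the paper settles in one step: since $M$ contains a maximal set of linearly independent $\xi$-derivative rows, its row space is closed under further $\partial_{\xi}$-differentiation, hence the $\xi$-dependence of $M$ can be eliminated by row manipulations and $\mathrm{ker}(M)$ is independent of $\xi$.
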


\begin{proof}
	A linear combination $c^{1}(\theta,\xi)v_{1}+\cdots+c^{\bar{d}}(\theta,\xi)v_{\bar{d}}$
	of the $\bar{d}$ base vector fields \eqref{eq:v_bar_d_norm} is projectable
	if and only if the resulting vector field is of the form \eqref{eq:proj_vctor_field}
	(the $d-\bar{d}$ base elements $v_{\bar{d}+1},\ldots,v_{d}$ in the
	form \eqref{eq:v_d_norm} are already projectable, since they project
	to zero). Because of the $\bar{d}\times\bar{d}$ identity matrix in
	the coefficients of the base elements \eqref{eq:v_bar_d_norm}, this
	can only be achieved with coefficients $c^{1}(\theta),\ldots,c^{\bar{d}}(\theta)$
	that are independent of the variables $\xi$. In addition, with the
	column vector of coefficients
	\begin{equation}
		c(\theta)=\left[\begin{array}{lll}
			c^{1}(\theta) & \cdots & c^{\bar{d}}(\theta)\end{array}\right]^{T}\,,\label{eq:coeffs_linearcomb}
	\end{equation}
	the remaining $n-\bar{d}$ coefficients 
	\begin{equation}
		\hat{a}^{\bar{i}}=\left[\begin{array}{lll}
			a_{1}^{\bar{i}}(\theta,\xi) & \cdots & a_{\bar{d}}^{\bar{i}}(\theta,\xi)\end{array}\right]c(\theta)\label{eq:a_hat}
	\end{equation}
	with $\bar{i}=\bar{d}+1,\ldots,n$ are independent of the coordinates
	$\xi$ if and only if all (also higher order, repeated and mixed)
	partial derivatives w.r.t. $\xi$ are equal to $0$, i.e. $\partial_{\xi^{j}}\hat{a}^{\bar{i}}=0$,
	$\partial_{\xi^{p}}\partial_{\xi^{j}}\hat{a}^{\bar{i}}=0,\ldots$
	with $j,p=1,\ldots,m$. Combining these conditions with equation \eqref{eq:a_hat}
	results in
	\begin{equation}
		\begin{aligned}\partial_{\xi^{j}}\left(\left[\begin{array}{lll}
				a_{1}^{\bar{i}}(\theta,\xi) & \cdots & a_{\bar{d}}^{\bar{i}}(\theta,\xi)\end{array}\right]\right)c(\theta) & =0\\
			\partial_{\xi^{p}}\partial_{\xi^{j}}\left(\left[\begin{array}{lll}
				a_{1}^{\bar{i}}(\theta,\xi) & \cdots & a_{\bar{d}}^{\bar{i}}(\theta,\xi)\end{array}\right]\right)c(\theta) & =0\\
			& \enskip\vdots
		\end{aligned}
		\label{eq:conditions}
	\end{equation}
	as the vector $c(\theta)$ is independent of $\xi$. When combining
	all partial derivatives of $\left[\begin{array}{lll}
		a_{1}^{\bar{i}}(\theta,\xi) & \cdots & a_{\bar{d}}^{\bar{i}}(\theta,\xi)\end{array}\right]$ from \eqref{eq:conditions} in one matrix, the conditions for the
	coefficients $\hat{a}^{\bar{i}}$ to be independent of $\xi$ are
	satisfied if and only if \eqref{eq:coeffs_linearcomb} lies in the
	kernel of the matrix \eqref{eq:matrix_M}. \\
	Since the matrix consists of a maximal number of linearly independent
	partial derivatives of the rows of $\left[\begin{array}{lll}
		a_{1}^{\bar{i}}(\theta,\xi) & \cdots & a_{\bar{d}}^{\bar{i}}(\theta,\xi)\end{array}\right]$ w.r.t. $\xi$, the $\xi$-dependence of $M$ can be eliminated by
	row manipulations. Hence, $\mathrm{ker}(M)$ is independent of $\xi$,
	and a basis for $\mathrm{ker}(M)$ forms a maximal set of independent
	coefficient-vectors \eqref{eq:coeffs_linearcomb}.
\end{proof}
Based on Proposition \ref{prop:D_bar}, the dimension of the largest
projectable subdistribution of $D$ can be easily derived as follows.
\begin{cor}
	\label{cor:dim_Dq}The dimension of the largest projectable subdistribution
	$\bar{D}$ of a given distribution $D$ is determined by 
	\begin{equation}
		\mathrm{dim}(\bar{D})=\mathrm{dim}(D)-\mathrm{rank}(M)\label{eq:dim_Dq}
	\end{equation}
	with the matrix $M$ according to \eqref{eq:matrix_M}.
\end{cor}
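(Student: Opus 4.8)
The plan is to read off the dimension count directly from the structure established in Proposition~\ref{prop:D_bar}. The key observation is that the basis \eqref{eq:basis_projectable_subdist} for the largest projectable subdistribution $\bar{D}$ splits into two groups: the $d-\bar{d}$ vector fields $v_{\bar{d}+1},\ldots,v_d$ of the form \eqref{eq:v_d_norm}, which are automatically projectable and contribute unconditionally, and the vector fields $\bar{v}_1,\ldots,\bar{v}_q$ obtained as $\xi$-independent linear combinations of $v_1,\ldots,v_{\bar{d}}$, whose number $q$ is precisely the dimension of $\mathrm{ker}(M)$.

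First I would verify that these two groups together form a linearly independent set, so that $\dim(\bar{D})$ equals the total count $q+(d-\bar{d})$ rather than something smaller. This follows because the $\bar{v}_i$ each carry a nontrivial $\partial_{\theta^k}$ component (for $k\in\{1,\ldots,\bar{d}\}$) inherited from the identity block in \eqref{eq:v_bar_d_norm}, while the $v_l$ with $l>\bar{d}$ have no such component (they lie entirely in the $\partial_{\xi}$ and $\partial_{\theta^{\bar{d}+1}},\ldots,\partial_{\theta^n}$ directions), and the coefficient vectors $c_1,\ldots,c_q$ are by construction a basis of $\mathrm{ker}(M)$ and hence linearly independent. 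Thus no nontrivial relation can hold among the combined set.

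Next I would apply the rank--nullity theorem to the matrix $M$. Since $M$ has $\bar{d}$ columns, we have $q=\dim(\mathrm{ker}(M))=\bar{d}-\mathrm{rank}(M)$. Substituting gives
\begin{equation}
	\dim(\bar{D})=q+(d-\bar{d})=\bigl(\bar{d}-\mathrm{rank}(M)\bigr)+(d-\bar{d})=d-\mathrm{rank}(M)\,,
\end{equation}
and since $d=\dim(D)$, this is exactly \eqref{eq:dim_Dq}. The only subtlety worth a remark is that $\mathrm{rank}(M)$ is well-defined as a constant (so that $q$ is genuinely the dimension of a distribution): Proposition~\ref{prop:D_bar} already establishes that $\mathrm{ker}(M)$ is independent of $\xi$ after eliminating the $\xi$-dependence by row operations, so $\mathrm{rank}(M)$ is constant in a suitable neighborhood and the count $q$ is unambiguous.

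I do not anticipate a serious obstacle here, since the corollary is essentially a bookkeeping consequence of the explicit basis in Proposition~\ref{prop:D_bar}. The one point requiring care is the linear independence argument in the second step: one must make sure that passing from the $\bar{d}$ candidate generators down to the $q$ surviving combinations does not accidentally merge any of them with the already-projectable $v_l$, which is ruled out precisely by the disjoint $\partial_{\theta}$-support described above.
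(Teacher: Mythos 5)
Your proposal is correct and follows essentially the same route as the paper's proof: read off $\dim(\bar{D})=q+(d-\bar{d})$ from the basis \eqref{eq:basis_projectable_subdist} of Proposition~\ref{prop:D_bar} and apply rank--nullity to the $\bar{d}$-column matrix $M$ to get $q=\bar{d}-\mathrm{rank}(M)$. The only difference is that you make explicit the linear independence of the combined basis (via the disjoint $\partial_{\theta^1},\ldots,\partial_{\theta^{\bar d}}$ support) and the constancy of $\mathrm{rank}(M)$, both of which the paper leaves implicit.
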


\begin{proof}
	The result immediately follows from $\mathrm{dim}(\bar{D})=d-\bar{d}+\mathrm{dim}(\mathrm{ker}(M))$
	combined with $\mathrm{dim}(\mathrm{im}(M))+\mathrm{dim}(\mathrm{ker}(M))=\bar{d}$.
\end{proof}
Establishing a connection between the largest projectable subdistribution
$\bar{D}\subset D$ and the smallest $\xi$-invariant extension of
the codistribution $P\cap\mathrm{span}\{\mathrm{d}\theta\}$ with
$P=D^{\perp}$, is essential for proving our main result in Theorem
\ref{thm:main_thm}.
\begin{prop}
	\label{prop:P_hat}Consider a $d$-dimensional distribution $D$ and
	its $(n+m-d)$-dimensional annihilator $P$ on an $(n+m)$-dimensional
	manifold $\mathcal{\mathcal{X\times U}}$ with normalized base elements
	\eqref{eq:v_bar_d_norm}-\eqref{eq:omega_xi_norm}. The smallest codistribution
	$\hat{P}$, which is invariant w.r.t. the distribution $\mathrm{span}\{\partial_{\xi}\}$
	and contains $P\cap\mathrm{span}\{\mathrm{d}\theta\}=\mathrm{span}\{\omega^{1},\ldots,\omega^{n-\bar{d}}\}$,
	is given by $\hat{P}=\mathrm{span}\{\rho^{1},\ldots,\rho^{\mathrm{rank}(M)},\omega^{1},\ldots,\omega^{n-\bar{d}}\}$
	with the 1-forms 
	\begin{equation}
		\begin{bmatrix}\rho^{1}\\
			\vdots\\
			\rho^{\mathrm{rank}(M)}
		\end{bmatrix}=-\hat{M}\begin{bmatrix}\mathrm{d}\theta^{1}\\
			\vdots\\
			\mathrm{d}\theta^{\bar{d}}
		\end{bmatrix}\label{eq:added_1forms}
	\end{equation}
	and the matrix $\hat{M}$, which consists of the linearly independent
	rows of \eqref{eq:matrix_M}.
\end{prop}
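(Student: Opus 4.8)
The plan is to exploit that, in the adapted coordinates \eqref{eq:adapted_coord}, every vector field $\partial_{\xi^j}$ has constant coefficients, so that for any $1$-form $\omega=\gamma_i\,\mathrm{d}z^i$ (with $z$ denoting the full coordinate tuple $(\theta,\xi)$) the coordinate formula for the Lie derivative collapses to $L_{\partial_{\xi^j}}\omega=(\partial_{\xi^j}\gamma_i)\,\mathrm{d}z^i$; that is, $L_{\partial_{\xi^j}}$ merely differentiates the coefficients of $\omega$ with respect to $\xi^j$ and leaves the differentials untouched. Consequently, invariance of a codistribution w.r.t. $\mathrm{span}\{\partial_\xi\}$ is equivalent to the requirement that the module of coefficient rows of its generators be closed under all operators $\partial_{\xi^1},\ldots,\partial_{\xi^m}$.

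First I would identify the intersection $P\cap\mathrm{span}\{\mathrm{d}\theta\}$. Because of the identity block in the differentials $\mathrm{d}\xi^{t-n+\bar{d}}$ of the forms \eqref{eq:omega_xi_norm}, no nontrivial function-linear combination of the $\omega^t$ can lie in $\mathrm{span}\{\mathrm{d}\theta\}$, whereas each $\omega^s$ of \eqref{eq:omega_theta_norm} already involves only $\mathrm{d}\theta$. Hence $P\cap\mathrm{span}\{\mathrm{d}\theta\}=\mathrm{span}\{\omega^1,\ldots,\omega^{n-\bar{d}}\}$, as asserted. Next, using \eqref{eq:coeffs}, I would compute $L_{\partial_{\xi^j}}\omega^s=-(\partial_{\xi^j}a_k^{s+\bar{d}})\,\mathrm{d}\theta^k$ (summation over $k=1,\ldots,\bar{d}$) and observe that the coefficient row $[\,\partial_{\xi^j}a_1^{s+\bar{d}}\ \cdots\ \partial_{\xi^j}a_{\bar{d}}^{s+\bar{d}}\,]$ is precisely row $s$ of the block $\partial_{\xi^j}L$.

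Iterating the Lie derivatives along $\partial_{\xi^{j}},\partial_{\xi^{p}},\ldots$ therefore produces exactly the $1$-forms whose coefficient rows are the (higher order, repeated, mixed) $\xi$-derivatives of the rows of $L$, contracted with $[\,\mathrm{d}\theta^1\ \cdots\ \mathrm{d}\theta^{\bar{d}}\,]^T$, i.e. the rows of the matrix $M$ from \eqref{eq:matrix_M}. Discarding linearly dependent rows leaves the $\mathrm{rank}(M)$ rows of $\hat{M}$, giving precisely the $1$-forms $\rho^1,\ldots,\rho^{\mathrm{rank}(M)}$ of \eqref{eq:added_1forms}. Since these $\rho^r$ involve only $\mathrm{d}\theta^1,\ldots,\mathrm{d}\theta^{\bar{d}}$ while the $\omega^s$ carry the identity block in $\mathrm{d}\theta^{\bar{d}+1},\ldots,\mathrm{d}\theta^{n}$, the collection $\{\rho^1,\ldots,\rho^{\mathrm{rank}(M)},\omega^1,\ldots,\omega^{n-\bar{d}}\}$ is linearly independent, so it genuinely spans an $(n-\bar{d}+\mathrm{rank}(M))$-dimensional codistribution $\hat{P}$.

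It remains to verify that $\hat{P}$ is invariant and that nothing smaller works. Minimality is immediate: any $\partial_\xi$-invariant codistribution containing the $\omega^s$ must contain all their iterated Lie derivatives, hence all the $\rho^r$. For invariance, by the coefficient-differentiation formula the only thing to check is that $\partial_{\xi^j}$ of a row of $\hat{M}$ again lies in the function-module spanned by the rows of $M$, so that $L_{\partial_{\xi^j}}\rho^r\in\hat{P}$ (and trivially $L_{\partial_{\xi^j}}\omega^s\in\hat{P}$). This is exactly the saturation property built into the definition of $M$ in \eqref{eq:matrix_M} -- derivative rows are adjoined until no further linearly independent ones appear. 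I expect this closure step to be the main obstacle, since it hinges on reading ``no further linearly independent rows'' as closure of the coefficient module over the ring of functions, so that the combining coefficients may themselves depend on $(\theta,\xi)$ -- the same subtlety that underlies the $\xi$-independence of $\mathrm{ker}(M)$ exploited in Proposition \ref{prop:D_bar}.
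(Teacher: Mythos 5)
Your proof is correct, and its computational core is the same as the paper's: in adapted coordinates $L_{\partial_{\xi^j}}$ acts by differentiating coefficients, the relation $\alpha_k^s=-a_k^{s+\bar{d}}$ from \eqref{eq:coeffs} identifies the iterated Lie derivatives of the $\omega^s$ with the rows of $M$, and the linearly independent ones are exactly the 1-forms \eqref{eq:added_1forms}. Where you genuinely depart from the paper is in the justification of ``smallest'': the paper delegates the principle that adjoining iterated Lie derivatives yields the smallest invariant codistribution to the proof of Proposition 6 in \cite{kolar2024dual}, whereas you prove both halves inline --- minimality, since any invariant codistribution containing the $\omega^s$ must contain all their iterated Lie derivatives and hence all $\rho^r$; and invariance of $\hat{P}$, since $\partial_{\xi^j}$ of any row of $\hat{M}$ is again a partial derivative of a row of $L$ and therefore, by the saturation built into \eqref{eq:matrix_M}, a function-linear combination of the rows of $\hat{M}$. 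Your closing remark pins down the right subtlety: this closure, and the soundness of stopping once ``no further linearly independent rows'' appear, must be read over the ring of smooth functions --- the same convention that makes $\mathrm{ker}(M)$ well defined in Proposition \ref{prop:D_bar} --- and with that reading a Leibniz-rule induction shows that dependence at one order propagates to all higher orders. One caveat on your opening ``equivalence'': the paper defines invariance w.r.t. the distribution $\mathrm{span}\{\partial_\xi\}$, i.e. w.r.t. all vector fields $v=b^j(\theta,\xi)\partial_{\xi^j}$, for which $L_v\omega=b^j L_{\partial_{\xi^j}}\omega+(\partial_{\xi^j}\rfloor\omega)\,\mathrm{d}b^j$; closure of the coefficient module under the basis operators alone is therefore not equivalent to invariance for a general codistribution, but it is equivalent here because $\hat{P}\subset\mathrm{span}\{\mathrm{d}\theta\}$ annihilates $\mathrm{span}\{\partial_\xi\}$ and kills the second term --- a one-sentence repair using a fact your proof already establishes, and a point on which the paper's own proof is equally silent.
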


\begin{proof}
	Since $P$ is given with normalized base elements \eqref{eq:omega_theta_norm}
	and \eqref{eq:omega_xi_norm}, the intersection $P\cap\mathrm{span}\{\mathrm{d}\theta\}$
	is obviously spanned by the $n-\bar{d}$ 1-forms \eqref{eq:omega_theta_norm}.
	By extending $P\cap\mathrm{span}\{\mathrm{d}\theta\}=\mathrm{span}\{\omega^{1},\ldots,\omega^{n-\bar{d}}\}$
	with suitable Lie derivatives $\rho$ of the 1-forms \eqref{eq:omega_theta_norm}
	as discussed in the proof of Proposition 6 in \cite{kolar2024dual},
	a basis for the smallest codistribution $\hat{P}$ which is invariant
	w.r.t. the distribution $\mathrm{span}\{\partial_{\xi}\}$ and contains
	$P\cap\mathrm{span}\{\mathrm{d}\theta\}$ is obtained. Considering
	that the repeated and mixed Lie derivatives of a 1-form \eqref{eq:omega_theta_norm}
	w.r.t. the vector fields $\partial_{\xi^{j}},\partial_{\xi^{p}},\ldots$
	are due to the normalized form given by 
	\begin{equation}
		\begin{aligned}L_{\partial_{\xi^{j}}}\omega^{s} & =\partial_{\xi^{j}}\alpha_{1}^{s}(\theta,\xi)\mathrm{d}\theta^{1}+\cdots+\partial_{\xi^{j}}\alpha_{\bar{d}}^{s}(\theta,\xi)\mathrm{d}\theta^{\bar{d}}\\
			L_{\partial_{\xi^{p}}}L_{\partial_{\xi^{j}}}\omega^{s} & =\partial_{\xi^{p}}\partial_{\xi^{j}}\alpha_{1}^{s}(\theta,\xi)\mathrm{d}\theta^{1}+\cdots+\partial_{\xi^{p}}\partial_{\xi^{j}}\alpha_{\bar{d}}^{s}(\theta,\xi)\mathrm{d}\theta^{\bar{d}}\\
			& \enskip\vdots
		\end{aligned}
		\label{eq:Lie_partial}
	\end{equation}
	and that according to \eqref{eq:coeffs} the coefficients of \eqref{eq:v_bar_d_norm}
	and \eqref{eq:omega_theta_norm} are related by $\alpha_{k}^{s}=-a_{k}^{s+\bar{d}}$,
	the linearly independent 1-forms of \eqref{eq:Lie_partial} can be
	written in the form \eqref{eq:added_1forms}. As a result, the 1-forms
	$\{\rho^{1},\ldots,\rho^{\mathrm{rank}(M)},\omega^{1},\ldots,\omega^{n-\bar{d}}\}$
	form a possible basis for $\hat{P}$ with $\hat{P}\subset\mathrm{span}\{\mathrm{d}\theta\}$.
\end{proof}

\subsection{Main Result \label{subsec:Duality}}

In the following, we show that the $E$-sequence of Algorithm \eqref{alg:distribution}
and the $P$-sequence of Algorithm \eqref{alg:dual}, which are both
defined on $\mathcal{X}\times\mathcal{U}$, annihilate each other.
\begin{thm}
	\label{thm:main_thm}Consider a discrete-time system \eqref{eq:sysEq}
	with the corresponding sequence of distributions \eqref{eq:E_sequ}
	according to Algorithm \ref{alg:distribution} as well as the sequence
	of codistributions \eqref{eq:P_sequ} according to Algorithm \ref{alg:dual}.
	For $k=1,\ldots,\bar{k}$, the sequences \eqref{eq:E_sequ} and \eqref{eq:P_sequ}
	are related by $P_{k}=E_{k-1}^{\perp}$.
\end{thm}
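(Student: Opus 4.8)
The plan is to prove the duality relation $P_k = E_{k-1}^\perp$ by induction on $k$, matching the step-by-step constructions of the two algorithms. The base case $k=1$ is immediate: Algorithm~\ref{alg:distribution} starts with $E_0 = \mathrm{span}\{\partial_u\}$ and Algorithm~\ref{alg:dual} starts with $P_1 = \mathrm{span}\{\mathrm{d}x\}$, and since $\mathrm{d}x$ annihilates exactly $\partial_u$ in the coordinates $(x,u)$, we have $P_1 = E_0^\perp$. The inductive hypothesis is $P_k = E_{k-1}^\perp$ for some $k \geq 1$, and the goal is to establish $P_{k+1} = E_k^\perp$, where $E_k$ is obtained from $E_{k-1}$ via steps~1--3 of Algorithm~\ref{alg:distribution} and $P_{k+1}$ is obtained from $P_k$ via steps~1--3 of Algorithm~\ref{alg:dual}.

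The heart of the argument is to show that each of the three dual steps preserves the annihilator relationship, and this is precisely where the technical results from the previous subsection are designed to be applied. First I would work in adapted coordinates \eqref{eq:adapted_coord}, where $D = E_{k-1}$ has normalized base elements \eqref{eq:v_bar_d_norm}, \eqref{eq:v_d_norm} and its annihilator $P = P_k$ has the normalized form \eqref{eq:omega_theta_norm}, \eqref{eq:omega_xi_norm}. The key observation is that in these coordinates, computing the largest projectable subdistribution $D_{k-1} = \bar{D} \subseteq E_{k-1}$ (step~1 of Algorithm~\ref{alg:distribution}) is dual to forming $\hat{P}$, the smallest $\mathrm{span}\{\partial_\xi\}$-invariant codistribution containing $P_k \cap \mathrm{span}\{\mathrm{d}\theta\}$ (the combination of steps~1--2 of Algorithm~\ref{alg:dual} in these coordinates, since $\mathrm{span}\{\mathrm{d}f\} = \mathrm{span}\{\mathrm{d}\theta\}$ and $\mathrm{span}\{\mathrm{d}f\}^\perp = \mathrm{span}\{\partial_\xi\}$). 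Proposition~\ref{prop:D_bar} and Proposition~\ref{prop:P_hat} both characterize these objects through the \emph{same} matrix $M$: the basis vectors of $\bar{D}$ are built from the kernel of $M$, while the additional generators $\rho$ of $\hat{P}$ are built from the linearly independent rows of $M$. I would therefore verify directly from these explicit bases that $\hat{P} = \bar{D}^\perp$, i.e. that the $\rho$'s and the $\omega^s$ of \eqref{eq:omega_theta_norm} annihilate exactly the vector fields $\bar{v}_1,\ldots,\bar{v}_q, v_{\bar d+1},\ldots,v_d$ spanning $\bar D$. This is the linear-algebraic core and should follow from the coefficient relations \eqref{eq:coeffs} together with the kernel/row-space duality for $M$.

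The remaining task is to handle the shift operations in steps~2--3 of both algorithms and confirm they preserve the annihilator relation. Pushing forward $\bar{D}$ under $f$ to obtain $\Delta_k = f_*(D_{k-1})$ on $\mathcal{X}^+$, and then pulling back via $E_k = \pi_*^{-1}(\Delta_k)$, must be matched on the dual side by the backward-shift $P_{k+1} = \delta^{-1}(P_{k+1}^+)$. Here I would use that in adapted coordinates $\theta = f(x,u)$ become the coordinates $x^+$ on $\mathcal{X}^+$, so that the pushforward $f_*$ acts on the projectable fields simply by dropping the $\partial_\xi$-components and relabeling $\theta \to x^+$, while the backward-shift $\delta^{-1}$ acts on 1-forms of the form $\omega_i(f)\,\mathrm{d}f^i$ by the rule stated before Definition~\ref{def:forward-flatness}, relabeling $\mathrm{d}\theta \to \mathrm{d}x$. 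Because $\hat P \subset \mathrm{span}\{\mathrm{d}\theta\}$ (by Proposition~\ref{prop:P_hat}) and $\bar D$ is projectable, both the pushforward and the backward-shift are compatible with the annihilator pairing, and the relation $\Delta_k = f_*(\bar D)$ together with $\pi_*^{-1}$ dualizes exactly to $\delta^{-1}$.

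The main obstacle I anticipate is the careful bookkeeping at step~3 on the codistribution side, specifically confirming that $\pi_*^{-1}(\Delta_k)^\perp = \delta^{-1}(\hat{P})$ as codistributions on $\mathcal{X}\times\mathcal{U}$. The subtlety is that $\pi$ and $f$ are different maps—$\pi$ simply identifies $x^+$ with $x$ while $f$ is the dynamics—so one must check that pulling back the distribution $\Delta_k$ via $\pi_*^{-1}$ (which reintroduces the full $\partial_\xi$ fibre) corresponds on the dual side to the backward-shift $\delta^{-1}$ of the purely-$\mathrm{d}\theta$ codistribution $\hat P = P_{k+1}^+$. Since $\delta^{-1}$ sends $\mathrm{d}\theta = \mathrm{d}f$ to $\mathrm{d}x$ and $\pi$ sends $\mathrm{d}x^+$ to $\mathrm{d}x$, the two relabelings coincide, and the annihilator of $\pi_*^{-1}(\Delta_k)$ is exactly $\mathrm{span}\{\mathrm{d}x\}$-pulled-back image of $\hat P^\perp$-in-$\theta$. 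Establishing this requires being explicit about how an annihilator relation on $\mathcal{X}^+$ transports back to $\mathcal{X}\times\mathcal{U}$ under two distinct but coordinate-compatible maps; once that compatibility is nailed down, the induction closes and yields $P_{k+1} = E_k^\perp$ for all $k=1,\ldots,\bar k$.
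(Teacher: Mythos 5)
Your overall strategy coincides with the paper's: induction on $k$, adapted coordinates, the characterization of $D_{k-1}=\bar D$ and $P_{k+1}^{+}=\hat P$ through one and the same matrix $M$ (kernel versus independent rows, i.e.\ Propositions \ref{prop:D_bar} and \ref{prop:P_hat}), and finally the observation that $f_{*}$, $\pi_{*}^{-1}$ and $\delta^{-1}$ act as mere coordinate relabelings. However, the claim you designate as the linear-algebraic core is false: in general $\hat P \neq \bar D^{\perp}$ on $\mathcal{X}\times\mathcal{U}$. Counting dimensions, $\dim\hat P = n-\bar d+\mathrm{rank}(M)$ while $\dim\bar D^{\perp} = n+m-d+\mathrm{rank}(M)$, and these agree only in the special case $d-\bar d=m$. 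The correct statement is the paper's Proposition \ref{prop:dual_D}, namely $D_{k-1}^{\perp}=P_{k+1}^{+}+P_{k}$ (equivalently, $P_{k+1}^{+}=D_{k-1}^{\perp}\cap\mathrm{span}\{\mathrm{d}\theta\}$). The paper's own example exhibits the discrepancy: at $k=1$, $P_{2}^{+}$ is $3$-dimensional whereas $D_{0}^{\perp}=P_{2}^{+}+P_{1}$ is $5$-dimensional.

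This is not merely a slip of wording, because it is exactly where your argument stops short of the equality it must deliver. Verifying that the base elements $\rho^{1},\ldots,\rho^{\mathrm{rank}(M)},\omega^{1},\ldots,\omega^{n-\bar d}$ annihilate the base elements of $\bar D$ -- which is the paper's Step 1, $D_{k-1}\rfloor P_{k+1}^{+}=0$, via the kernel relation \eqref{eq:M_kern} -- yields only the inclusion $\hat P\subseteq\bar D^{\perp}$. Transporting an inclusion through $f_{*}$, $\pi_{*}^{-1}$ and $\delta^{-1}$ then yields only $P_{k+1}\subseteq E_{k}^{\perp}$, which does not close the induction. What is missing is the complementary-dimension argument, which the paper carries out explicitly: with Corollary \ref{cor:dim_Dq} one gets $\dim(E_{k})=\bar d-\mathrm{rank}(M)+m$ as in \eqref{eq:dim(E_k)} and $\dim(P_{k+1})=n-\bar d+\mathrm{rank}(M)$ as in \eqref{eq:dim(P_k+1)}, hence $\dim(E_{k})+\dim(P_{k+1})=n+m$, and only this upgrades $E_{k}\rfloor P_{k+1}=0$ to $P_{k+1}=E_{k}^{\perp}$. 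Alternatively you could prove the equality $\hat P=\Delta_{k}^{\perp}$ on $\mathcal{X}^{+}$ -- there the dimensions do match, since $\dim\Delta_{k}=\bar d-\mathrm{rank}(M)$ by rank--nullity for $M$ -- and then pull back through $\pi$ and $\delta^{-1}$; this is essentially what your last paragraph gestures at, but you attempt to derive it from the false equality on $\mathcal{X}\times\mathcal{U}$ rather than from a dimension count. Your proposal contains all the raw material (you invoke kernel/row-space duality of $M$), but as written that material is spent on an incorrect identity instead of the dimension bookkeeping the proof actually requires.
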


\begin{proof}
	For $k=1$, it is obvious that $P_{1}=\mathrm{span}\{\mathrm{d}x\}$
	annihilates $E_{0}=\mathrm{span}\{\partial_{u}\}$. For all subsequent
	steps, we will show by induction that if $P_{k}=E_{k-1}^{\perp}$,
	then it follows that $P_{k+1}=E_{k}^{\perp}$. The induction step
	is proven by computing $P_{k+1}$ and $E_{k}$ according to Algorithms
	\ref{alg:distribution} and \ref{alg:dual}, and verifying that $E_{k}\rfloor P_{k+1}=0$
	as well as $\mathrm{dim}(E_{k})+\mathrm{dim}(P_{k+1})=n+m$. The proof
	is divided into two steps. In the first step, assuming $P_{k}=E_{k-1}^{\perp}$,
	we show that the largest projectable subdistribution $D_{k-1}\subset E_{k-1}$
	and the smallest $\xi$-invariant codistribution $P_{k+1}^{+}$ containing
	$P_{k}\cap\mathrm{span}\{\mathrm{d}\theta\}$ satisfy the condition
	$D_{k-1}\rfloor P_{k+1}^{+}=0$. Based on this intermediate result,
	in the second step, we show that $P_{k+1}=E_{k}^{\perp}$ .
	
	\emph{Step 1}: Starting with $E_{k-1}=\mathrm{span}\{v_{1},\ldots,v_{d}\}$
	and $P_{k}=\mathrm{span}\{\omega^{1},\ldots,\omega^{n+m-d}\}$, with
	normalized base elements \eqref{eq:v_bar_d_norm}-\eqref{eq:omega_xi_norm},
	the distribution $D_{k-1}=\mathrm{span}\{\bar{v}_{1},\ldots,\bar{v}_{q},v_{\bar{d}+1},\ldots,v_{d}\}$
	and codistribution $P_{k+1}^{+}=\mathrm{span}\{\rho^{1},\ldots,\rho^{\mathrm{rank}(M)},\omega^{1},\ldots,\omega^{n-\bar{d}}\}$
	are determined using Propositions \ref{prop:D_bar} and \ref{prop:P_hat},
	respectively. Consequently, it can be shown that the interior product
	of all their base elements yields zero.\\
	First, due to the assumption that $P_{k}=E_{k-1}^{\perp}$ and because
	of $D_{k-1}\subset E_{k-1}$, it follows that $D_{k-1}\rfloor P_{k}=0.$
	Consequently, because of $\omega^{1},\ldots,\omega^{n-\bar{d}}\in P_{k}$,
	the interior product of these 1-forms $\omega^{1},\ldots,\omega^{n-\bar{d}}\in P_{k+1}^{+}$
	with the base elements of $D_{k-1}$ is zero. Second, the interior
	product of the 1-forms $\rho^{1},\ldots,\rho^{\mathrm{rank}(M)}\in P_{k+1}^{+}$
	with the base vector fields $v_{\bar{d}+1},\ldots,v_{d}\in D_{k-1}$
	is certainly zero, because of $v_{\bar{d}+1},\ldots,v_{d}\subset\mathrm{span}\{\partial_{\xi}\}$
	and $P_{k+1}^{+}\subset\mathrm{span}\{\mathrm{d}\theta\}$. Additionally,
	since according to Proposition \ref{prop:P_hat} the 1-forms $\rho^{1},\ldots,\rho^{\mathrm{rank}(M)}$
	are given by \eqref{eq:added_1forms}, the interior product with the
	remaining base elements $\bar{v}_{1},\ldots,\bar{v}_{q}\in D_{k-1}$
	can in matrix notation be written as
	\begin{equation}
		-\hat{M}\begin{bmatrix}c_{1}^{1}(\theta) & \cdots & c_{q}^{1}(\theta)\\
			\vdots & \ddots & \vdots\\
			c_{1}^{\bar{d}}(\theta) & \cdots & c_{q}^{\bar{d}}(\theta)
		\end{bmatrix}=\underline{0}\label{eq:M_kern}
	\end{equation}
	due to the $\bar{d}\times\bar{d}$ identity matrix in the coefficients
	of the base elements \eqref{eq:v_bar_d_norm}. As the column vectors
	$\left[\begin{array}{lll}
		c_{1}^{1}(\theta) & \cdots & c_{1}^{\bar{d}}(\theta)\end{array}\right]^{T},\ldots,$ $\left[\begin{array}{lll}
		c_{q}^{1}(\theta) & \cdots & c_{q}^{\bar{d}}(\theta)\end{array}\right]^{T}$ form a basis for the kernel of the matrix $M$ (also for $\hat{M}$),
	equation \eqref{eq:M_kern} certainly holds true and shows that $D_{k-1}\rfloor P_{k+1}^{+}=0$.
	
	\emph{Step 2}: The codistribution $P_{k+1}$ is determined according
	to Step 3 in Algorithm \ref{alg:dual} by a backward-shift of $P_{k+1}^{+}=\mathrm{span}\{\rho^{1},\ldots,\rho^{\mathrm{rank}(M)},\omega^{1},\ldots,$
	$\omega^{n-\bar{d}}\}$. Since the extended codistribution $P_{k+1}^{+}$
	is invariant w.r.t. the distribution $\mathrm{span}\{\partial_{\xi}\}$,
	there exists a basis with 1-forms independent of $\xi^{j}$. Considering
	that $\theta^{i}=f^{i}(x,u)$, the application of the backward-shift
	operator simply replaces the coordinates $\theta$ with the coordinates
	$x$.\\
	Furthermore, according to Steps 2 and 3 of Algorithm \ref{alg:distribution},
	$E_{k}$ is calculated by first computing the pushforward $\Delta_{k}=f_{*}(D_{k-1})$
	on the $n$-dimensional manifold $\mathcal{X}^{+}$. In adapted coordinates
	$(\theta,\xi)$ the pushforward takes the simple form $\Delta_{k}=pr_{1,*}(D_{k-1})$
	with the coordinates $\theta$ being substituted by $x^{+}$. Since
	the vector fields $v_{\bar{d}+1},\ldots,v_{d}\in D_{k-1}$ project
	to zero, it follows that $\mathrm{dim}(\Delta_{k})=\mathrm{dim}(D_{k-1})-(d-\bar{d})$.
	Finally, $E_{k}$ is obtained by the application of the map $E_{k}=\pi_{*}^{-1}(\Delta_{k})$
	from \eqref{eq:piEq}, which replaces the coordinates $x^{+}$ with
	the coordinates $x$ and adds the $m$ vector fields $\partial_{u^{1}},\ldots,\partial_{u^{m}}$.\\
	So, based on $D_{k-1}\rfloor P_{k+1}^{+}=0$, considering that replacing
	coordinates does not affect the result of the interior product and
	that $\partial_{u}\rfloor P_{k+1}=0$ holds due to $P_{k+1}\subset\mathrm{span}\{\mathrm{d}x\}$,
	combined with the construction of $E_{k}$ and $P_{k+1}$ as shown
	above, ensures that the condition $E_{k}\rfloor P_{k+1}=0$ is satisfied.
	Additionally, the dimension of $E_{k}$ follows as 
	\begin{align}
		\mathrm{dim}(E_{k}) & =\mathrm{dim}(D_{k-1})-(d-\bar{d})+m=\nonumber \\
		& =\mathrm{dim}(E_{k-1})-\mathrm{rank}(M)-(d-\bar{d})+m=\label{eq:dim(E_k)}\\
		& =\bar{d}-\mathrm{rank}(M)+m\nonumber 
	\end{align}
	with \eqref{eq:dim_Dq} and $\mathrm{dim}(E_{k-1})=d$. Due to the
	fact that 
	\begin{equation}
		\mathrm{dim}(P_{k+1})=\mathrm{dim}(P_{k+1}^{+})=n-\bar{d}+\mathrm{rank}(M)\,,\label{eq:dim(P_k+1)}
	\end{equation}
	the condition $\mathrm{dim}(E_{k})+\mathrm{dim}(P_{k+1})=n+m$ is
	met, which completes the proof.
\end{proof}
In conclusion, we have proven that starting with the codistribution
$P_{1}=\mathrm{span}\{\mathrm{d}x\}$ and the distribution $E_{0}=\mathrm{span}\{\partial_{u}\}$,
$P_{k}=E_{k-1}^{\perp}$ holds for $k=1,\ldots,\bar{k}$. Thus, the
two tests for forward-flatness of \cite{KolarDiwoldSchoberl:2023}
and \cite{kolar2024dual} can be considered as dual. Furthermore,
in Proposition \ref{prop:D_bar} we established an efficient way to
compute the largest projectable subdistribution $\bar{D}$ of a given
distribution $D$.

\section{Further Properties\label{sec:decomp}}

In Subsection \ref{subsec:Duality}, it has been shown that the sequences
\eqref{eq:E_sequ} and \eqref{eq:P_sequ} annihilate each other. The
relation of the sequence \eqref{eq:sequ_D_k} with the codistributions
of Algorithm \ref{alg:dual} is discussed in the following proposition.
\begin{prop}
	\label{prop:dual_D}Consider a discrete-time system \eqref{eq:sysEq}
	with the corresponding sequence of distributions \eqref{eq:sequ_D_k}
	according to Algorithm \ref{alg:distribution} as well as the sequence
	of codistributions \textup{
		\begin{equation}
			P_{\bar{k}}^{+}+P_{\bar{k}-1}\subset P_{\bar{k}}^{+}+P_{\bar{k}-1}\subset\ldots\subset P_{2}^{+}+P_{1}\label{eq:D_perp}
		\end{equation}
		with $P_{k+1}^{+}$ and $P_{k}$} according to Algorithm \ref{alg:dual}.
	For $k=1,\ldots,\bar{k}-1$, the sequences \eqref{eq:sequ_D_k} and
	\eqref{eq:D_perp} are related by $P_{k+1}^{+}+P_{k}=D_{k-1}^{\perp}$.
\end{prop}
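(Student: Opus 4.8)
The plan is to prove $P_{k+1}^{+}+P_{k}=D_{k-1}^{\perp}$ by combining the explicit normalized bases from the technical results of Subsection~4.1 with the duality already established in Theorem~\ref{thm:main_thm}. Throughout I work in adapted coordinates $(\theta,\xi)$ and use $E_{k-1}=\mathrm{span}\{v_1,\ldots,v_d\}$ with normalized base elements \eqref{eq:v_bar_d_norm}--\eqref{eq:v_d_norm}, together with its annihilator $P_k=E_{k-1}^{\perp}=\mathrm{span}\{\omega^1,\ldots,\omega^{n+m-d}\}$ given by \eqref{eq:omega_theta_norm}--\eqref{eq:omega_xi_norm}. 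By Proposition~\ref{prop:D_bar}, the largest projectable subdistribution is $D_{k-1}=\mathrm{span}\{\bar v_1,\ldots,\bar v_q,v_{\bar d+1},\ldots,v_d\}$, and by Proposition~\ref{prop:P_hat} the smallest $\xi$-invariant extension is $P_{k+1}^{+}=\mathrm{span}\{\rho^1,\ldots,\rho^{\mathrm{rank}(M)},\omega^1,\ldots,\omega^{n-\bar d}\}$ with the $\rho$'s given by \eqref{eq:added_1forms}.

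The strategy is the usual annihilator argument: verify that $P_{k+1}^{+}+P_k$ annihilates $D_{k-1}$ (i.e.\ $D_{k-1}\rfloor(P_{k+1}^{+}+P_k)=0$) and then match dimensions. For the annihilation, first observe that $D_{k-1}\rfloor P_k=0$ holds because $D_{k-1}\subset E_{k-1}=P_k^{\perp}$, so every generator of $P_k$ pairs to zero with every base field of $D_{k-1}$. For the $\rho$-forms, I would reuse exactly the computation already carried out in Step~1 of the proof of Theorem~\ref{thm:main_thm}: the fields $v_{\bar d+1},\ldots,v_d$ lie in $\mathrm{span}\{\partial_\xi\}$ while each $\rho^i\in\mathrm{span}\{\mathrm{d}\theta\}$, so those pairings vanish, and the pairings of $\rho^1,\ldots,\rho^{\mathrm{rank}(M)}$ with $\bar v_1,\ldots,\bar v_q$ reduce to $-\hat M\,[\,c_\mu^\kappa(\theta)\,]=\underline 0$ by \eqref{eq:M_kern}, since the $c_\mu$ span $\ker(M)=\ker(\hat M)$. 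Hence $D_{k-1}\rfloor(P_{k+1}^{+}+P_k)=0$, so $P_{k+1}^{+}+P_k\subseteq D_{k-1}^{\perp}$.

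It remains to check that the two sides have the same dimension. Here I would compute $\dim(P_{k+1}^{+}+P_k)$ and $\dim(D_{k-1}^{\perp})=n+m-\dim(D_{k-1})$ and show they agree. From \eqref{eq:dim_Dq}, $\dim(D_{k-1})=d-\mathrm{rank}(M)$, so $\dim(D_{k-1}^{\perp})=n+m-d+\mathrm{rank}(M)$. For the left-hand side, $P_k$ contributes $n+m-d$ dimensions; adding $P_{k+1}^{+}$ contributes only the $\rho^1,\ldots,\rho^{\mathrm{rank}(M)}$ beyond $P_k$, because the $\omega^1,\ldots,\omega^{n-\bar d}$ generators of $P_{k+1}^{+}$ already lie in $P_k$. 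The $\rho$-forms, being $\mathrm{rank}(M)$ linearly independent elements of $\mathrm{span}\{\mathrm{d}\theta\}$ built from the independent rows $\hat M$, are independent modulo $P_k$ (this is precisely where the maximal linear independence guaranteeing $\mathrm{rank}(\hat M)=\mathrm{rank}(M)$ is used). Thus $\dim(P_{k+1}^{+}+P_k)=n+m-d+\mathrm{rank}(M)=\dim(D_{k-1}^{\perp})$, and the two sides coincide.

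The main obstacle I anticipate is the dimension bookkeeping of the sum $P_{k+1}^{+}+P_k$: one must argue carefully that the newly added forms $\rho^i$ are genuinely independent from \emph{all} of $P_k$ and not merely from the subcodistribution $P_k\cap\mathrm{span}\{\mathrm d\theta\}$. Since $P_k$ contains the $\omega^t$ of type \eqref{eq:omega_xi_norm} carrying $\mathrm d\xi$ terms while each $\rho^i$ lies purely in $\mathrm{span}\{\mathrm d\theta\}$, any dependence of a nonzero combination of the $\rho^i$ on $P_k$ would have to use only the $\theta$-part of the $\omega^t$; showing this forces the combination to already lie in $\mathrm{span}\{\omega^1,\ldots,\omega^{n-\bar d}\}$, contradicting $\mathrm{rank}(\hat M)=\mathrm{rank}(M)$ linear independence, is the one point requiring genuine care rather than routine calculation. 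Everything else follows from the normalized forms and the results already established in Propositions~\ref{prop:D_bar} and~\ref{prop:P_hat}.
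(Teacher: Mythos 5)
Your proof is correct and takes essentially the same route as the paper's own: both verify $D_{k-1}\rfloor(P_{k+1}^{+}+P_{k})=0$ by reusing the annihilation conditions $D_{k-1}\rfloor P_{k+1}^{+}=0$ and $D_{k-1}\rfloor P_{k}=0$ from Step 1 of the proof of Theorem \ref{thm:main_thm}, and then conclude by the dimension count $\mathrm{dim}(D_{k-1})+\mathrm{dim}(P_{k+1}^{+}+P_{k})=n+m$ using \eqref{eq:dim_Dq}. The only difference is that you justify explicitly why the forms $\rho^{1},\ldots,\rho^{\mathrm{rank}(M)}$ are independent modulo $P_{k}$, a point the paper's statement $\mathrm{dim}(P_{k+1}^{+}+P_{k})=n+m-d+\mathrm{rank}(M)$ leaves implicit.
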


\begin{proof}
	Combining $P_{k+1}^{+}=\mathrm{span}\{\rho^{1},\ldots,\rho^{\mathrm{rank}(M)},\omega^{1},\ldots,$
	$\omega^{n-\bar{d}}\}$, $P_{k}=\mathrm{span}\left\{ \omega^{1},\ldots,\omega^{n+m-d}\right\} $
	and the conditions $D_{k-1}\rfloor P_{k+1}^{+}=0$ and $D_{k-1}\rfloor P_{k}=0$
	from step 1 of the proof of Theorem \ref{thm:main_thm} confirms that
	$D_{k-1}\rfloor(P_{k+1}^{+}+P_{k})=0$ is met. Additionally, because
	of $\mathrm{dim}(P_{k+1}^{+}+P_{k})=n+m-d+\mathrm{rank}(M)$ and \eqref{eq:dim_Dq},
	it follows that $\mathrm{dim}(D_{k-1})+\mathrm{dim}(P_{k+1}^{+}+P_{k})=n+m$,
	which completes the proof.
\end{proof}
If a system \eqref{eq:sysEq} is forward-flat, it can be transformed
into a triangular form (\ref{eq:basic_decomposition_flat}) by a suitable
state- and input transformation \eqref{eq:decompostion_state_transformation}
and \eqref{eq:decompostion_input_transformation} as described in
Definition \ref{def:basic_decomposition_flat}. Such a system decomposition
into a subsystem and an endogenous dynamic feedback can be carried
out in each step $k$ of Algorithms \ref{alg:distribution} and \ref{alg:dual}.
In \cite{KolarDiwoldSchoberl:2023}, these repeated system decompositions
are achieved by straightening out the sequences of involutive distributions
\eqref{eq:sequ_Delta_k} and \eqref{eq:sequ_D_k}. At the first decomposition
step, due to $E_{1}=\pi_{*}^{-1}(\Delta_{1})$ and $P_{2}=E_{1}^{\perp}$
according to Theorem \ref{thm:main_thm}, straightening out $\Delta_{1}$
so that $\Delta_{1}=\mathrm{span}\{\partial_{\bar{x}_{1}^{+}}\}$
automatically leads to $E_{1}=\mathrm{span}\{\partial_{\bar{x}_{1}},\partial_{u}\}$
and $P_{2}=\mathrm{span}\{\mathrm{d}\bar{x}_{2}\}$. Furthermore,
due to $P_{2}^{+}+P_{1}=D_{0}^{\perp}$ as stated in Proposition \ref{prop:dual_D},
straightening out $D_{0}$ so that $D_{0}=\mathrm{span}\{\partial_{\bar{u}_{1}}\}$
leads to $P_{2}^{+}+P_{1}=\mathrm{span}\{\mathrm{d}\bar{x}_{1},\mathrm{d}\bar{x}_{2},\mathrm{d}\bar{u}_{2}\}$.
So, bringing the codistributions $P_{2}$ and $P_{2}^{+}+P_{1}$ into
the form $P_{2}=\mathrm{span}\{\mathrm{d}\bar{x}_{2}\}$ and $P_{2}^{+}+P_{1}=\mathrm{span}\{\mathrm{d}\bar{x}_{1},\mathrm{d}\bar{x}_{2},\mathrm{d}\bar{u}_{2}\}$,
gives rise to a state- and input transformation \eqref{eq:decompostion_state_transformation}
and \eqref{eq:decompostion_input_transformation}, which achieves
a decomposition (\ref{eq:basic_decomposition_flat}). In \cite{kolar2024dual},
the state transformation \eqref{eq:decompostion_state_transformation}
is obtained in exactly this way, but the input transformation \eqref{eq:decompostion_input_transformation}
is determined by simply ``normalizing'' $\mathrm{dim}(\bar{u}_{2})=\mathrm{rank}(\partial_{u}f_{2})$
equations of the subsystem \eqref{eq:decomposition_subsystem}.\\
In the following, we demonstrate that the input transformation derived
from the approach in \cite{kolar2024dual} is encompassed within the
set of possible input transformations obtained by straightening out
the sequence \eqref{eq:sequ_D_k} as described in \cite{KolarDiwoldSchoberl:2023}.
\begin{prop}
	\label{prop:trafo}Consider a system \eqref{eq:sysEq} with $\mathrm{rank}(\partial_{u}f)=m$
	which is transformed by a state transformation \eqref{eq:decompostion_state_transformation}
	into the form\begin{subequations}\label{eq:P2_straightened_out_sys}
		\begin{align}
			\bar{x}_{2}^{+} & =f_{2}(\bar{x}_{2},\bar{x}_{1},u)\label{eq:P2_straightened_out_subsys}\\
			\bar{x}_{1}^{+} & =f_{1}(\bar{x}_{2},\bar{x}_{1},u)\label{eq:P2_straightened_out_feedback}
		\end{align}
	\end{subequations} such that $\Delta_{1}=\mathrm{span}\{\partial_{\bar{x}_{1}^{+}}\}$
	(and $P_{2}=\mathrm{span}\{\mathrm{d}\bar{x}_{2}\}$). Performing
	an input transformation \eqref{eq:decompostion_input_transformation}
	with $\mathrm{dim}(\bar{u}_{2})=\mathrm{rank}(\partial_{u}f_{2})$
	such that \textup{$\mathrm{dim}(\bar{u}_{2})$} equations of the subsystem
	\eqref{eq:P2_straightened_out_subsys} are simplified to $\bar{x}_{2}^{i_{2},+}=\bar{u}_{2}^{i_{2}}$\footnote{This can be achieved by setting $\bar{u}_{2}^{i_{2}}=f_{2}^{i_{2}}(\bar{x}_{2},\bar{x}_{1},u)$
		and choosing $\bar{u}_{1}$ such that \eqref{eq:decompostion_input_transformation}
		is invertible w.r.t. $u$.} results in $D_{0}=\mathrm{span}\{\partial_{\bar{u}_{1}}\}$. Conversely,
	after an input transformation \eqref{eq:decompostion_input_transformation}
	which results in $D_{0}=\mathrm{span}\{\partial_{\bar{u}_{1}}\}$,
	there exists a further input transformation of the form $\tilde{u}_{2}=\Phi_{u_{2}}(\bar{x},\bar{u}_{2})$
	with $\mathrm{dim}(\tilde{u}_{2})=\mathrm{dim}(\bar{u}_{2})$ such
	that $\mathrm{dim}(\tilde{u}_{2})$ equations of the subsystem \eqref{eq:P2_straightened_out_subsys}
	take the form $\bar{x}_{2}^{i_{2},+}=\tilde{u}_{2}^{i_{2}}$.
\end{prop}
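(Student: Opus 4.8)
The plan is to reduce both assertions to a single description of the distribution $D_0$ that is intrinsic in the inputs, and then read off each direction from it. The starting point is Proposition \ref{prop:dual_D} applied at $k=1$, which gives $D_0^\perp = P_2^+ + P_1$. Since we are given $P_2 = \mathrm{span}\{\mathrm{d}\bar{x}_2\}$, a forward-shift yields $P_2^+ = \mathrm{span}\{\mathrm{d}f_2\}$, and together with $P_1 = \mathrm{span}\{\mathrm{d}x\} = \mathrm{span}\{\mathrm{d}\bar{x}_1,\mathrm{d}\bar{x}_2\}$ this produces
\[
D_0 = \mathrm{span}\{\mathrm{d}\bar{x}_1,\mathrm{d}\bar{x}_2,\mathrm{d}f_2\}^\perp .
\]
Because $\mathrm{d}\bar{x}_1,\mathrm{d}\bar{x}_2$ annihilate exactly $\mathrm{span}\{\partial_u\}$, while $\mathrm{d}f_2$ acts on $\mathrm{span}\{\partial_u\}$ through the Jacobian $\partial_u f_2$, this says precisely that $D_0 = \{\,c^j\partial_{u^j} : \partial_u f_2\cdot c = 0\,\} = \mathrm{span}\{\partial_u\}\cap(\mathrm{span}\{\mathrm{d}f_2\})^\perp$, with $\dim(D_0)=m-\mathrm{rank}(\partial_u f_2)$. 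This coordinate-free characterization is the backbone of the whole argument, and I would establish it first.

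For the forward direction I would carry out the normalizing input transformation \eqref{eq:decompostion_input_transformation} by setting $\bar{u}_2^{i_2}=f_2^{i_2}$ for the selected indices. Since $\mathrm{rank}(\partial_u f_2)=\dim(\bar{u}_2)$, all input-dependence of $f_2$ factors through $\bar{u}_2$ (by the implicit function theorem), so that the subsystem \eqref{eq:P2_straightened_out_subsys} becomes independent of $\bar{u}_1$; hence $\partial_{\bar{u}_1}f_2=0$ and $\partial_{\bar{u}_2}f_2$ has full column rank. Writing a coefficient vector as $c=(c_2,c_1)$ with respect to $(\bar{u}_2,\bar{u}_1)$, the intrinsic description shows $c\in D_0$ iff $\partial_{\bar{u}_2}f_2\cdot c_2=0$, i.e. iff $c_2=0$. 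Together with the dimension count $\dim(D_0)=m-\mathrm{rank}(\partial_u f_2)=\dim(\bar{u}_1)$, this gives $D_0=\mathrm{span}\{\partial_{\bar{u}_1}\}$.

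For the converse I would run the same reasoning backwards. From $D_0=\mathrm{span}\{\partial_{\bar{u}_1}\}$ and the intrinsic description one reads off $\partial_{\bar{u}_1}f_2=0$ and that $\partial_{\bar{u}_2}f_2$ has full column rank $\dim(\bar{u}_2)$. I would then choose $\dim(\bar{u}_2)$ row indices $I$ for which the corresponding square submatrix of $\partial_{\bar{u}_2}f_2$ is regular and define $\tilde{u}_2^{i_2}=f_2^{i_2}(\bar{x},\bar{u}_2)$ for $i_2\in I$. Since this map depends only on $(\bar{x},\bar{u}_2)$ and is invertible in $\bar{u}_2$ by the inverse function theorem, completing it with $\bar{u}_1$ yields an admissible input transformation of the required form $\tilde{u}_2=\Phi_{u_2}(\bar{x},\bar{u}_2)$, under which the selected subsystem equations read $\bar{x}_2^{i_2,+}=\tilde{u}_2^{i_2}$.

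The main obstacle is the step from the easy containment $D_0\subseteq\{c\partial_u:\partial_u f_2\cdot c=0\}$ — which follows immediately from $\Delta_1=f_*(D_0)\subseteq\mathrm{span}\{\partial_{\bar{x}_1^+}\}$ — to the equality, since a priori not every $c$ with $\partial_u f_2\cdot c=0$ need generate a projectable vector field. This is exactly what Proposition \ref{prop:dual_D} supplies, by fixing $\dim(D_0)=m-\mathrm{rank}(\partial_u f_2)$ through the annihilator relation and thereby forcing the inclusion to be tight. The accompanying dimension bookkeeping $\dim(\bar{u}_1)=\dim\ker(\partial_u f_2)=m-\mathrm{rank}(\partial_u f_2)$ must also be checked so that $\mathrm{span}\{\partial_{\bar{u}_1}\}\subseteq D_0$ is genuinely an equality; once the intrinsic description is in hand, however, both directions are short.
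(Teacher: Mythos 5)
Your proof is correct, but it follows a genuinely different route from the paper's. The paper establishes both directions by leaning on the decomposition results of the two cited references: for the forward direction it invokes the proof of Theorem 15 in \cite{kolar2024dual} (normalization yields the triangular form) and then argues by contradiction -- since $\partial_{\bar{u}_{2}}f_{2}$ has full rank, $D_{0}\not\subset\mathrm{span}\{\partial_{\bar{u}_{1}}\}$ would force $f_{*}(D_{0})\not\subset\mathrm{span}\{\partial_{\bar{x}_{1}^{+}}\}$, contradicting $\Delta_{1}=\mathrm{span}\{\partial_{\bar{x}_{1}^{+}}\}$ -- closing with the dimension count $\mathrm{dim}(\bar{u}_{1})=\mathrm{dim}(\bar{x}_{1})$; for the converse it cites \cite{KolarDiwoldSchoberl:2023} to get the triangular form from straightening out $D_{0}$, deduces full rank of $\partial_{\bar{u}_{2}}f_{2}$ from $\mathrm{rank}(\partial_{u}f)=m$ and the triangular structure, and then normalizes. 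You instead make Proposition \ref{prop:dual_D} at $k=1$ the single workhorse: from $D_{0}^{\perp}=P_{2}^{+}+P_{1}=\mathrm{span}\{\mathrm{d}\bar{x},\mathrm{d}f_{2}\}$ you obtain the intrinsic characterization $D_{0}=\{c^{j}\partial_{u^{j}}:\partial_{u}f_{2}\cdot c=0\}$ with $\mathrm{dim}(D_{0})=m-\mathrm{rank}(\partial_{u}f_{2})$, after which both directions reduce to linear algebra. This is a legitimate and attractive alternative: it avoids using the external decomposition theorems as black boxes (in the converse, $\partial_{\bar{u}_{1}}f_{2}=0$ falls out immediately from $\partial_{\bar{u}_{1}}\rfloor\mathrm{d}f_{2}=0$, with no appeal to the straightening-out result), it exploits the duality machinery that is the point of this paper, and -- as you correctly isolate -- it is exactly what upgrades the easy inclusion $D_{0}\subseteq\ker(\partial_{u}f_{2})\cap\mathrm{span}\{\partial_{u}\}$ (which follows from $\Delta_{1}\subseteq\mathrm{span}\{\partial_{\bar{x}_{1}^{+}}\}$ alone) to an equality, the genuinely nontrivial step. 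The paper's version is shorter by citation and stays closer to the decomposition procedure it is comparing. Two small points: in your forward direction, the statement that all input-dependence of $f_{2}$ factors through $\bar{u}_{2}$ is a constant-rank (functional dependence) argument rather than an application of the implicit function theorem -- it is precisely the content of the cited Theorem 15, so you are re-proving rather than bypassing it; and your use of Proposition \ref{prop:dual_D} is non-circular, since it is established in the paper before and independently of this proposition.
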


\begin{proof}
	In \cite{kolar2024dual}, it was shown in the proof of Theorem 15
	that by performing an input transformation \eqref{eq:decompostion_input_transformation}
	with $\mathrm{dim}(\bar{u}_{2})=\mathrm{rank}(\partial_{u}f_{2})$
	such that $\mathrm{dim}(\bar{u}_{2})$ equations of the subsystem
	\eqref{eq:P2_straightened_out_subsys} are simplified to $\bar{x}_{2}^{i_{2},+}=\bar{u}_{2}^{i_{2}}$
	the system (\ref{eq:P2_straightened_out_sys}) is transformed into
	the triangular form (\ref{eq:basic_decomposition_flat}). Since the
	Jacobian matrix $\partial_{\bar{u}_{2}}f_{2}$ has full rank, $D_{0}\not\subset\mathrm{span}\{\partial_{\bar{u}_{1}}\}$
	would immediately lead to $f_{*}(D_{0})\not\subset\mathrm{span}\{\partial_{\bar{x}_{1}^{+}}\}$.
	However, this is a contradiction to $\Delta_{1}=f_{*}(D_{0})=\mathrm{span}\{\partial_{\bar{x}_{1}^{+}}\}$.
	With $\mathrm{dim}(\bar{u}_{1})=\mathrm{dim}(\bar{x}_{1})$ we get
	$D_{0}=\mathrm{span}\{\partial_{\bar{u}_{1}}\}$.\\
	Conversely, as shown in \cite{KolarDiwoldSchoberl:2023}, an input
	transformation obtained by straightening out $D_{0}$ transforms the
	system (\ref{eq:P2_straightened_out_sys}) into the triangular form
	(\ref{eq:basic_decomposition_flat}). Because of the assumption that
	the Jacobian matrix $\partial_{u}f$ has full rank and the triangular
	structure of the system, it immediately follows that also the Jacobian
	matrix $\partial_{\bar{u}_{2}}f_{2}$ must have full rank. Hence,
	we can simplify $\mathrm{dim}(\bar{u}_{2})$ equations of the subsystem
	\eqref{eq:P2_straightened_out_subsys} to the form $\bar{x}_{2}^{i_{2},+}=\tilde{u}_{2}^{i_{2}}$
	by introducing new inputs $\tilde{u}_{2}^{i_{2}}=f_{2}^{i_{2}}(\bar{x}_{2},\bar{x}_{1},\bar{u}_{2})$.
\end{proof}
After such a decomposition step, the new inputs $(\bar{x}_{1},\bar{u}_{2})$
of the subsystem \eqref{eq:P2_straightened_out_subsys} may include
redundant inputs with $\partial_{(\bar{x}_{1},\bar{u}_{2})}f_{2}<m$.
However, as noted in Remark 3 of \cite{KolarSchoberlDiwold:2019},
the redundant inputs of the subsystem \eqref{eq:P2_straightened_out_subsys}
appear among the variables $\bar{x}_{1}$ and can therefore be eliminated
by an additional input transformation. They are also candidates for
components of a possible flat output of the sytem \eqref{eq:sysEq}.
As a result, Proposition \ref{prop:trafo} applies at every decomposition
step, enabling repeated system decompositions as in \cite{KolarDiwoldSchoberl:2023}.

\section{Example\label{sec:example}}

In this section, we illustrate the duality of the two forward-flatness
tests by an academic example.

\subsection{Academic Example}

Consider the system
\begin{equation}
	\begin{aligned}x^{1,+} & =\tfrac{x^{2}+x^{3}+3x^{4}}{u^{1}+2u^{2}+1}\\
		x^{2,+} & =x^{1}\left(x^{3}+1\right)\left(u^{1}+2u^{2}-3\right)+x^{4}-3u^{2}\\
		x^{3,+} & =u^{1}+2u^{2}\\
		x^{4,+} & =x^{1}\left(x^{3}+1\right)+u^{2}
	\end{aligned}
	\label{eq:sys_ex}
\end{equation}
with $n=4$, $m=2$ and the equilibrium $\left(x_{0},u_{0}\right)=(0,0)$,
which is not static feedback linearizable. To illustrate the duality
of the two tests from Subsection \ref{subsec:Geometric} and \ref{subsec:Geometric-Dual},
with the help of adapted coordinates
\begin{equation}
	\theta^{i}=f^{i}(x,u)\,,\:\xi^{1}=x^{1}\,,\:\xi^{2}=x^{3}\label{eq:adapt_coord_ex}
\end{equation}
with $i=1,\dots,n$ and the results from Section \ref{sec:Duality},
we show that $P_{k}=E_{k-1}^{\perp}$ applies for $k=1,\ldots,\bar{k}$
on $\mathcal{X}\times\mathcal{U}$.

\subsubsection{Distribution-based Test\label{subsec:Distribution-based-Test}}

Using Algorithm \ref{alg:distribution}, the sequence \eqref{eq:E_sequ}
results in
\begin{align}
	E_{0} & =\mathrm{span}\left\{ \partial_{u^{1}},\partial_{u^{2}}\right\} \nonumber \\
	E_{1} & =\mathrm{span}\left\{ -3\partial_{x^{2}}+\partial_{x^{4}},\partial_{u^{1}},\partial_{u^{2}}\right\} \label{eq:E_sequ_ex}\\
	E_{2} & =\mathrm{span}\left\{ \partial_{x^{1}}-\tfrac{x^{3}+1}{x^{1}}\partial_{x^{3}},\partial_{x^{2}},\partial_{x^{4}},\partial_{u^{1}},\partial_{u^{2}}\right\} \nonumber \\
	E_{3} & =\mathrm{span}\left\{ \partial_{x^{1}},\partial_{x^{2}},\partial_{x^{3}},\partial_{x^{4}},\partial_{u^{1}},\partial_{u^{2}}\right\} \,.\nonumber 
\end{align}
According to Theorem \ref{thm:condition}, the system \eqref{eq:sys_ex}
is forward-flat with $\mathrm{dim}(E_{3})=n+m$. Additionally, the
sequence \eqref{eq:sequ_D_k}, consisting of the projectable subdistributions
$D_{k-1}\subset E_{k-1}$ with $k=1,\ldots,\bar{k}-1$ is given by
\begin{equation}
	D_{0}=\mathrm{span}\left\{ -2\partial_{u^{1}}+\partial_{u^{2}}\right\} \,,\:D_{1}=E_{1}\,,\:D_{2}=E_{2}\label{eq:D_sequ_ex}
\end{equation}

\subsubsection{Codistribution-based Test\label{subsec:Codistribution-based-Test}}

Analogously to Subsection \ref{subsec:Distribution-based-Test},
the sequence \eqref{eq:P_sequ}, which is calculated according to
Algorithm \ref{alg:dual}, follows as
\begin{align}
	P_{1} & =\mathrm{span}\left\{ \mathrm{d}x^{1},\mathrm{d}x^{2},\mathrm{d}x^{3},\mathrm{d}x^{4}\right\} \nonumber \\
	P_{2} & =\mathrm{span}\left\{ \mathrm{d}x^{1},\mathrm{d}x^{3},\mathrm{d}x^{2}+3\mathrm{d}x^{4}\right\} \label{eq:P_sequ_ex}\\
	P_{3} & =\mathrm{span}\left\{ \tfrac{x^{3}+1}{x^{1}}\mathrm{d}x^{1}+\mathrm{d}x^{3}\right\} \nonumber \\
	P_{4} & =0\,.\nonumber 
\end{align}
Thus, the system \eqref{eq:sys_ex} is forward-flat according to
Theorem \ref{thm:dual_condition} as well. Finally, the sequence \eqref{eq:D_perp}
can be stated as
\begin{equation}
	\begin{aligned}P_{2}^{+}+P_{1} & =\mathrm{span}\left\{ \mathrm{d}x^{1},\ldots,\mathrm{d}x^{4},\mathrm{d}u^{1}+2\mathrm{d}u^{2}\right\} \\
		P_{3}^{+}+P_{2} & =P_{2}\\
		P_{4}^{+}+P_{3} & =P_{3}\,.
	\end{aligned}
	\label{eq:D_bot_sequ_ex}
\end{equation}

\subsubsection{Comparison}

From the sequence \eqref{eq:D_sequ_ex}, it can be concluded that
only in the first step with $k=1$, the distribution $E_{0}$, which
in adapted coordinates \eqref{eq:adapt_coord_ex} can be depicted
as 
\[
\begin{aligned}E_{0}= & \mathrm{span}\left\{ \mathrm{\partial_{\theta^{1}}}-\tfrac{\theta^{3}+1}{\theta^{1}}\partial_{\theta^{3}}-\tfrac{\xi^{1}(\xi^{2}+1)(\theta^{3}+1)}{3\theta^{1}}\partial_{\theta^{4}},\right.\\
	& \left.\partial_{\theta^{2}}-\tfrac{1}{3}\partial_{\theta^{4}}\right\} 
\end{aligned}
\]
with normalized base elements of the form \eqref{eq:v_bar_d_norm},
is not projectable. Analogously, as can be inferred from \eqref{eq:D_bot_sequ_ex},
the intersection $P_{1}\cap\mathrm{span}\{\mathrm{d}\theta\}$ with
the codistribution 
\[
\begin{aligned}P_{1}= & \mathrm{span}\left\{ \mathrm{\tfrac{\theta^{3}+1}{\theta^{1}}}\mathrm{d}\theta^{1}+\mathrm{d}\theta^{3},\tfrac{\xi^{1}(\xi^{2}+1)(\theta^{3}+1)}{3\theta^{1}}\mathrm{d}\theta^{1}\right.\\
	& \left.+\tfrac{1}{3}\mathrm{d}\theta^{2}+\mathrm{d}\theta^{4},\mathrm{d}\xi^{1},\mathrm{d}\xi^{2}\right\} 
\end{aligned}
\]
spanned by base elements of the form \eqref{eq:omega_theta_norm}-\eqref{eq:omega_xi_norm},
is not yet invariant w.r.t. $\mathrm{span}\{\partial_{\xi}\}$. All
further steps with $k=2,\ldots,\bar{k}$ are trivial, as the distributions
$E_{k}$ are projectable and the intersections $P_{k}\cap\mathrm{span}\{\mathrm{d}\theta\}$
are already $\xi$-invariant. With the matrix
\[
\hat{M}=\begin{bmatrix}-\tfrac{(\xi^{2}+1)(\theta^{3}+1)}{3\theta^{1}} & 0\\
	-\tfrac{\xi^{1}(\theta^{3}+1)}{3\theta^{1}} & 0
\end{bmatrix}\,,
\]
which consists of the linearly independent rows of \eqref{eq:matrix_M},
the largest projectable subdistribution $D_{0}=\mathrm{span}\{-3\partial_{\theta^{2}}+\partial_{\theta^{4}}\}\subset E_{0}$
($D_{0}=\mathrm{span}\{-2\partial_{u^{1}}+\partial_{u^{2}}\}$ in
coordinates $(x,u)$) follows immediately with the application of
Proposition \ref{prop:D_bar}. Furthermore, according to \eqref{eq:added_1forms}
from Proposition \ref{prop:P_hat}, the extension of $P_{1}\cap\mathrm{span}\{\mathrm{d}\theta\}$
by the 1-form $\rho^{1}=\mathrm{d}\theta^{1}$ results in the invariant
codistribution $P_{2}^{+}$ with a $\xi$-independent basis given
by $P_{2}^{+}=\mathrm{span}\left\{ \mathrm{d}\theta^{1},\mathrm{d}\theta^{3},\mathrm{d}\theta^{2}+3\mathrm{d}\theta^{4}\right\} $.
With $E_{1}=\pi_{*}^{-1}(f_{*}(D_{0}))$ and $P_{2}=\delta^{-1}(P_{2}^{+})$,
the corresponding elements from \eqref{eq:E_sequ_ex} and \eqref{eq:P_sequ_ex}
follow immediately. When comparing \eqref{eq:E_sequ_ex} and \eqref{eq:P_sequ_ex},
the interior product $v\rfloor\omega$ yields $0$ for all base elements
$\omega\in P_{k}$ and $v\in E_{k-1}$ and $\mathrm{dim}(E_{k-1})+\mathrm{dim}(P_{k})=n+m$.
So, we have shown that the relation $P_{k}=E_{k-1}^{\perp}$ holds,
which is consistent with Theorem \ref{thm:main_thm}. Furthermore,
\eqref{eq:D_sequ_ex} and \eqref{eq:D_bot_sequ_ex} also satisfy the
conditions $D_{k-1}\rfloor(P_{k+1}^{+}+P_{k})=0$ and $\mathrm{dim}(D_{k-1})+\mathrm{dim}(P_{k+1}^{+}+P_{k})=n+m$,
which shows the validity of Proposition \eqref{prop:dual_D}.

\section{Conclusion}

We have shown that for the Algorithms \ref{alg:distribution} and
\ref{alg:dual}, starting with the distribution $E_{0}=\mathrm{span}\{\partial_{u}\}$
and the codistribution $P_{1}=\mathrm{span}\{\mathrm{d}x\}$, the
relation $P_{k}=E_{k-1}^{\perp}$ holds for every step $k=1,\ldots,\bar{k}$.
That is, the sequence of integrable codistributions \eqref{eq:P_sequ}
annihilates the sequence of involutive distributions \eqref{eq:E_sequ}
on the manifold $\mathcal{X}\times\mathcal{U}$. Consequently, the
distribution-based geometric test from Subsection \ref{subsec:Geometric}
and the codistribution-based geometric test from Subsection \ref{subsec:Geometric-Dual}
are indeed dual. Additionally, in Proposition \ref{prop:D_bar}, we
established an efficient method for computing the largest projectable
subdistribution $\bar{D}$ of a distribution $D$, which is an essential
step in Algorithm \ref{alg:distribution}. It was also shown that
while the state transformations \eqref{eq:decompostion_state_transformation}
required for repeated system decompositions of the form (\ref{eq:basic_decomposition_flat})
as proposed in the papers \cite{KolarDiwoldSchoberl:2023} and \cite{kolar2024dual}
do align, the same is not true for the input transformations \eqref{eq:decompostion_input_transformation}.
We have shown that the input transformation derived from the method
in \cite{kolar2024dual}, which only requires normalizing a certain
number of equations, automatically straightens out the distributions
$D_{k}$. It is therefore a special case of the possible input transformations
derived from the method in \cite{KolarDiwoldSchoberl:2023}.

\begin{ack}                               
This research was funded in whole, or in part, by the Austrian Science Fund (FWF) P36473. For the purpose of open access, the author has applied a CC BY public copyright licence to any Author Accepted Manuscript version arising from this submission.
\end{ack}

\bibliographystyle{plain}        
\bibliography{Bibliography}



\end{document}